\theoremstyle{plain}
\newtheorem{theorem}{Theorem}
\newtheorem{proposition}{Proposition}
\newtheorem{corollary}{Corollary}[theorem]
\newtheorem{lemma}{Lemma}
\theoremstyle{definition}
\newtheorem{problem}{Problem}
\newtheorem{definition}{Definition}
\theoremstyle{remark}
\newtheorem{remark}[theorem]{Remark}
\DeclareMathOperator{\sign}{sign}
\begin{document}
\newcommand*{\htwo}{\mathcal H_2}
\newcommand*{\hinf}{\mathcal H_\infty}
\newcommand*{\jo}{(j\omega)}
\renewcommand*{\s}{(s)}
\newcommand*{\tr}{\mathrm{tr}}
\newcommand*{\AM}{\mathrm{AM}}
\newcommand*{\GM}{\mathrm{GM}}
\newcommand{\underbracedmatrix}[2]{%
	\left(
	\smash[b]{\underbrace{
			\begin{matrix}#1\end{matrix}
		}_{#2}}
	\right)
	\vphantom{\underbrace{\begin{matrix}#1\end{matrix}}_{#2}}
}
\newcommand{\minimatrix}[1]{\mbox{\tiny $\setlength{\arraycolsep}{2pt}\begin{pmatrix} #1 \end{pmatrix}$}}


\title{A Convex Approach to Inclusion Identification\\for Robust Control}
\title{A Convex Approach to Norm-Bounded Differential Inclusion Identification}
\title{An LMI Approach to Identifying Norm-Bounded Differential Inclusions}
\title{A Convex Approach to System Identification for Robust Control}
\title{The Convex ``Quadric Inclusion Program'':\\System Identification for Robust Control}
\title{Identifying $\hinf$-Models: An LMI Approach}
\title{Quadric Inclusion Programs: an LMI Approach to\\$\hinf$-Model Identification}

\author{Gray C. Thomas and Luis Sentis
\thanks{This work was supported by NASA Space Technology Research Fellowship grant NNX15AQ33H, ``Controlling Robots with a Spring in Their Step,'' for which Gray is the Fellow and Luis is the Advising Professor. Authors are with---respectively---the Departments of Mechanical and Aerospace Engineering,
        University of Texas at Austin, Austin, TX 78712, USA. Send correspondence to 
        {\tt\small gray.c.thomas@utexas.edu}}%
}
\markboth{Pre-Print Manuscript}%
{Shell \MakeLowercase{\textit{et al.}}: Bare Demo of IEEEtran.cls for IEEE Journals}

\maketitle


\IEEEpeerreviewmaketitle
\begin{abstract}
Practical application of $\hinf$ robust control relies on system identification of a valid model-set, described by a linear system in feedback with a stable norm-bounded uncertainty. This model-set should explain all possible (or at least all previously measured) behavior for the controlled plant. Such models can be viewed as norm-bounded inclusions in the frequency domain. This note introduces the ``Quadric Inclusion Program'' that can identify inclusions from input--output data as a convex problem. We prove several key properties of this algorithm and give a geometric interpretation for its behavior. While we stress that the inclusion fitting is outlier-sensitive by design, we offer a method to mitigate the effect of measurement noise. We apply this method to robustly approximate simulated frequency domain data using orthonormal basis functions. The result compares favorably with a least squares approach that satisfies the same data inclusion requirements. 
\end{abstract}






\section{Introduction}\label{intro}

\IEEEPARstart{S}{ystem} identification of a valid $\hinf$ plant model marks the first obstacle to applying the robust $\hinf$ control theory of e.g. \cite{ZhouDoyleGlover1996Book,ZhouDoyle1998Book,DullerudPaganini2013Book}. If this model is not believable then $\hinf$ synthesis provides guesses rather than guarantees---with the parameters of the uncertainty acting as tunable knobs. In many cases this is an acceptable strategy, but in some cases uncertainty demands more accurate measurement. Guesswork is done conservatively, and conservatism in the uncertainty model can degrade performance. When uncertainty is the performance-limiting factor, we expect this uncertainty model to represent some sort of physical limit to the plant. 

Most identification stems from the celebrated prediction error method \cite{Ljung1999Book} which produces high quality linear models complete with a measure of model certainty in the form of a model parameter covariance matrix. This parameter covariance, its implication for robust control, its improved value when using instrumental variables (or orthonormal basis function parameterizations), and the influence of weighting functions and closed loop identification controllers on it have all been extensively studied \cite{ZangBitmeadGevers1995Automatica,HakvoortVDHof1997TAC,ForssellLjung1999Automatica,AlbertosSala2002Book,BomboisGeversScorlettiAnderson2001Automatica}. This confidence measure is often taken out of context, however, as it represents only the distribution of models which would result from the same identification process if the data were regenerated. Prediction error uncertainty is not capable of representing the influence of a nonlinearity \cite{Toffner1996Book}. Moreover, with additional data the model parameter covariance will decrease even if the error variance is constant---a sought after property of consistency---but a property which clearly indicates that the parameter covariance is not a measure of any physical property. 
%

A paradigm known as stochastic embedding \cite{GoodwinSalgado1989ACSP,Toffner1996Book}, has been proposed to work around this---adding an additional source of uncertainty to the computation of parameter covariance. By supposing that the model parameters are sampled from a distribution with pre-defined covariance, the stochastic embedding approach estimates the means of these parameter distributions rather than the parameters themselves---and returns a much more conservative covariance estimate. This covariance does not approach zero with more samples---instead it approaches the a priori covariance.

The primary alternative to prediction error identification is broad-spectrum frequency-domain estimation \cite{PintelonSchoukens2012book}. This approach uses a ratio of the Fast Fourier Transform (FFT) spectra of the input to the output. To eliminate noise, the FFT data must be averaged in the frequency domain, often weighted by the magnitude of the input (or occasionally by the magnitude of the output)---making a ratio of cross spectrum to power spectrum. An uncertainty bound can be obtained by repeatedly generating estimates of the transfer function and then drawing a bound around them numerically \cite{ZhouDoyle1998Book}, but our method takes this further and simultaneously optimizes the model and the shape matrices. $\hinf$--oriented identification based on corrupted point-samples of the frequency response has been analyzed before in a single input, single output setting \cite{HelmickiJacobsonNett1990ACC}, \cite{HelmickiJacobsonNett1991TAC}, but the model is discrete time SISO, there are no shape matrices, and the approach assumes a unique true model (i.e, not a true model-set). 

The popular domain of model validation (through lack of invalidation) tests a priori model-sets on time domain data \cite{PoollaKhargonekarTikkuKrauseNagpal1994TAC}. This approach uses the Kalman-Yakubovitch-Popov Lemma to relate frequency domain bounds to time domain bounds on ``uncertainty'' signals, and tests for the satisfiability of those bounds using convex optimization (linear programming) within a finite horizon. An elegant approach to be sure, but not one which identifies model-sets. Adding flexibility in the model-set would make the problem non-convex.

In this technical note we introduce the Quadric Inclusion Program (QIP\footnote{Quadric Inclusion Programming is also abbreviated QIP.}), a convex program that identifies norm-bounded linear inclusions (a type of model-set with good scaling properties) from pairs of input and output vectors (Section \ref{sec:qip}). The next three sections develop geometric intuition for this program, demonstrate its consistent estimation, and introduce a method to ignore noise-dominated data points. Section \ref{sec:FDID2} applies the QIP methodology to identify a minimally uncertain model which explains the frequency domain measurements of a simulated mechanical system. This system has realistic imperfections, including input and sensor noise, inconsistent friction elements, and a time delay.


\section{The Quadric Inclusion Program}\label{sec:qip}

In this section we consider the linear norm-bounded inclusion $y\in\{(A+B\Delta C)x \ : \Delta^*\Delta\preceq I\}$ with $y\in\mathbb C^{n_y}$ and $x\in \mathbb C^{n_x}$ vectors, $A$ a real matrix of appropriate dimension, $B$ and $C$ real invertible square matrices, and complex valued $\Delta$ unknown but norm bounded\footnote{$\Delta^*\Delta\preceq I\iff \|\Delta\|\leq 1$.}: $\Delta^*\Delta\preceq I$. If the linear equation $y=Ax$ is geometrically analogous to a line, then this inclusion is analogous to a cone. In this section we present a convex program which finds the real-valued parameters of the inclusion, the elements of the $A$, $B$, and $C$ matrices, based on a series of measurements of $x$ and $y$. 

However, we must apply a lossless convexification, as the problem is not naturally amenable to convex optimization tools. And for this convexification we need to invoke an alternative form for this inclusion:

\begin{proposition}[Quadratic Form]\label{prop:quadric}
A pair of input and output vectors ($x$, $y$) satisfies the inclusion
\begin{align}
y\in \{ (A+B\Delta C)x\ :\ \|\Delta\|\leq 1\},\label{eq:base_inclusion}
\end{align}
with full rank $B$
if and only if it satisfies the following quadratic form inequality:
\begin{align}
\begin{pmatrix}
y\\x
\end{pmatrix}^*
\underbrace{
\begin{pmatrix}
-B^{-T}B^{-1} & B^{-T}B^{-1}A\\
A^TB^{-T}B^{-1} & C^TC-A^TB^{-T}B^{-1}A
\end{pmatrix}}_{\displaystyle{Q}}
\begin{pmatrix}
y\\x
\end{pmatrix}
\geq 0. \label{eq:quadric_form}
\end{align}
\end{proposition}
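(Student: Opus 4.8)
The plan is to strip \eqref{eq:base_inclusion} down to a statement about whether some norm-bounded $\Delta$ can carry one vector to another, and then to settle that existence question explicitly. First I would unfold the set membership: $y\in\{(A+B\Delta C)x:\|\Delta\|\le1\}$ means precisely that there is a $\Delta$ with $\Delta^*\Delta\preceq I$ and $y-Ax=B\Delta Cx$. Since $B$ is square and full rank, left-multiplying by $B^{-1}$ gives the equivalent condition $B^{-1}(y-Ax)=\Delta(Cx)$. Abbreviating $u:=Cx$ and $v:=B^{-1}(y-Ax)$, the inclusion holds if and only if there exists a contraction $\Delta$ (i.e.\ $\|\Delta\|\le1$) with $\Delta u=v$.

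The crux is then the following elementary fact: such a contraction $\Delta$ exists if and only if $\|v\|\le\|u\|$. Necessity is just submultiplicativity of the operator norm, $\|v\|=\|\Delta u\|\le\|\Delta\|\,\|u\|\le\|u\|$. For sufficiency, if $u=0$ the bound forces $v=0$ and $\Delta=0$ works; if $u\ne0$, the rank-one choice $\Delta=vu^*/(u^*u)$ satisfies $\Delta u=v$ and has operator norm $\|v\|/\|u\|\le1$. I expect this is the step a careful reader will want spelled out even though the construction is short, and it is exactly where the convexification is lossless---no conservatism enters because the worst-case contraction is actually realizable.

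The remaining step is a routine bilinear expansion. Because $B$ and $C$ are real, $\|v\|^2=(y-Ax)^*B^{-T}B^{-1}(y-Ax)$ and $\|u\|^2=x^*C^TC\,x$, so $\|u\|^2-\|v\|^2\ge0$ expands (collecting the $y^*(\cdot)y$, $y^*(\cdot)x$, $x^*(\cdot)y$, and $x^*(\cdot)x$ blocks) into exactly the quadratic form \eqref{eq:quadric_form} with the stated $Q$. The only bookkeeping subtlety is keeping conjugate transposes on the complex vectors $x,y$ while ordinary transposes suffice on the real matrices $A$, $B$, $C$; everything else is direct matching of coefficients against the four blocks of $Q$.
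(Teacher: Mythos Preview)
Your proof is correct and follows essentially the same approach as the paper: both reduce the inclusion to the existence of a contraction sending $Cx$ to $B^{-1}(y-Ax)$, settle necessity via submultiplicativity and sufficiency via the rank-one choice $\Delta=vu^*/(u^*u)$ (which is exactly the paper's $\Delta=B^{-1}(y-Ax)\,x^*C^T/(x^*C^TCx)$), and then expand $\|u\|^2-\|v\|^2\ge0$ into the block form of $Q$. Your abstraction to $u,v$ and the direct operator-norm computation are a slightly cleaner packaging, but the argument is the same.
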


\begin{proof}
First, consider $x$ and $y$ which satisfy \eqref{eq:base_inclusion}:
\begin{align}
\Delta Cx &=B^{-1}(y-Ax),\\
\|Cx\|^2  &\geq \|\Delta C x\|^2 = \|B^{-1}(y-Ax)\|^2,\\
0 &\leq x^*C^TC x-  (y-Ax)^*B^{-T}B^{-1}(y-Ax) .
\end{align}
Which is equivalent to \eqref{eq:quadric_form}. Conversely, for $x$ and $y$ which satisfy \eqref{eq:quadric_form} (assuming $Cx\neq0$, noting that the trivial case where $Cx=0$ results in \eqref{eq:quadric_form}$\implies y=Ax\implies$\eqref{eq:base_inclusion}) we can choose $\Delta = B^{-1}(y-Ax)x^*C^T/(x^*C^TCx)$ to satisfy both $y = Ax + B \Delta C x$ (trivially) and $\Delta^*\Delta\preceq I$:
\begin{align}
\Delta^*\Delta &= \frac{C x (y-Ax)^* B^{-T}B^{-1}(y-Ax)x^* C^T}{(x^*C^TCx)^2}\\
&= \gamma \frac{C x x^* C^T}{(x^*C^TCx)}, \label{eq:CxxC}
\end{align}
with $\gamma = (y-Ax)^* B^{-T}B^{-1}(y-Ax)/(x^*C^TCx) \leq 1$ as $x^*C^TCx\geq (y-Ax)^* B^{-T}B^{-1}(y-Ax)$. The matrix in \eqref{eq:CxxC} is rank 1, and positive semi-definite, with $\gamma\leq1$ as the only non-zero eigenvalue. This ensures that  $\Delta^*\Delta\preceq I$.
\end{proof}

In this form, the inclusion has become a linear inequality constraint on the elements of $Q$. But not all symmetric matrices will have the appropriate structure to be interpreted as $Q$ for the purpose of backing out the inclusion matrices. Fortunately, we can re-parameterize around this issue by decomposing the matrix $Q$.

\begin{definition}[SS-DD]
The matrix $Q$ can be expressed as the difference of two positive semi-definite matrices. Using linear matrix inequality constraints on four new real-valued matrix variables, we can construct a similar structure to $Q$, which we call the {Split Semi-Definite Decomposition (SS-DD)}. The following three equations constrain the new SS-DD variables $X_B$, $X_{A}$, $X_{AA}$ and $X_C$:
\begin{align}
	0&\preceq Q^\prime \triangleq \begin{pmatrix}
	X_B& -X_{A}\\
	-X_{A}^T& X_{AA}
	\end{pmatrix},	\label{eq:SSDD:Qneg}\\
	0&\preceq X_C,	\label{eq:SSDD:Qpos}\\
	Q& = \begin{pmatrix}
	-X_B& X_{A}\\
	X_{A}^T& X_C-X_{AA}
	\end{pmatrix}. \label{eq:SSDD:Qsplit}
\end{align}
However, while all quadratic form $Q$ can be written using the four matrices of the SS-DD form, the converse is not true in general; the $Q$ of the SS-DD will only match the $Q$ structure in \eqref{eq:quadric_form} in a special case:
\begin{equation}
X_{AA}=X_{A}^TX_B^{-1}X_{A}, \label{eq:SSDD:special}
\end{equation}
that is, if we can write the RHS of \eqref{eq:SSDD:Qneg} as 
$
\begin{pmatrix}X_B & -X_{A}\end{pmatrix}^T$ $
\begin{pmatrix}X_B^{-1}\end{pmatrix}$ $
\begin{pmatrix}X_B & -X_{A}\end{pmatrix},
$
then the rank of the RHS of \eqref{eq:SSDD:Qneg} is the rank of $X_B$ and the SS-DD structure matches that of \eqref{eq:quadric_form}:
\begin{align}
\left\{\begin{matrix}
X_B &=& B^{-T}B^{-1},\\
X_{A}&=&B^{-T}B^{-1}A,\\
X_{AA}&=&A^TB^{-T}B^{-1}A,\\
X_C&=&C^TC.\end{matrix} \right.  \label{eq:SSDD:assignment}
\end{align}
\end{definition}

The SS-DD is a lossless convexification of the search space, and we will introduce a convex program which is formatted to follow the rules of disciplined convex programming  \cite{BoydVandenberghe2004Book} to find the SS-DD. However, that program must also result in satisfaction of \eqref{eq:SSDD:special} for all optimal solutions if the variables are to be interpreted as an inclusion. This is guaranteed for our QIP, but not for other cost functions or additional constraints involving the SS-DD. Note that the following problem uses the cost function elaborated in Sec.~\ref{sec:width} and is proven to be consistent in Sec.~\ref{sec:consistency}.
\begin{problem}[Degenerate Quadric\footnote{Named for the geometric shape explained in the next section.} Inclusion Program]\label{prob:degenerateQIP}
	The inclusion that minimizes a width-like cost\footnote{$\GM\{\sigma(B)\} \|C\|_{\mathrm{Frobenius}}$, where $\GM$ denotes the geometric mean; $\sigma(B)$, the spectrum, or set of singular values. The trace constraint on $X_C$ resolves a scale ambiguity in the SS-DD and allows the problem to be convex.} while including a list of data point pairs $\xi_i = (y_i^*,\ x_i^*)^*$, $i=1,\dotsc,N$ can be found by the following convex optimization program
	\begin{align}
	{\text{\rm maximize}}\quad &
	\log (\det (X_B))
	\nonumber\\
	\text{\rm over}\quad&Q,\ X_B,\ X_{A},\ X_{AA},\ X_{C} \nonumber\\
	\text{\rm subject to} \quad&\text{SS-DD equations \eqref{eq:SSDD:Qneg}-\eqref{eq:SSDD:Qsplit}} \\
	&  1 = \tr[X_C] \nonumber \\
	&  0 \leq \xi_{i}^* Q \xi_{i} \quad \forall\quad i\in 1\dots N \nonumber 
	\end{align}
\end{problem}

\begin{proposition} \label{prop:specialness}
	The SS-DD satisfies \eqref{eq:SSDD:special} for all solutions to Prob.~\ref{prob:degenerateQIP} with finite cost.
\end{proposition}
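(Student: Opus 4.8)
The plan is to show that the objective $\log\det(X_B)$ is maximized (over the feasible set) only when the Schur complement condition $X_{AA} = X_A^T X_B^{-1} X_A$ holds. The key observation is that the constraints $\eqref{eq:SSDD:Qneg}$ and $\eqref{eq:SSDD:Qsplit}$ couple $X_B$ to $X_{AA}$ and $X_C$ in a way that leaves ``slack'' precisely when the Schur complement is not tight, and that this slack can always be traded to increase $\det(X_B)$. First I would note that finite cost forces $X_B \succ 0$, so the Schur complement $S \triangleq X_{AA} - X_A^T X_B^{-1} X_A$ is well-defined, and $\eqref{eq:SSDD:Qneg}$ is equivalent (by the Schur complement lemma) to $S \succeq 0$. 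Thus the claim is exactly that $S = 0$ at every optimum.

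Second, I would set up a perturbation argument: suppose $S \succ 0$ in some direction (i.e. $S \neq 0$), at a feasible point $(Q, X_B, X_A, X_{AA}, X_C)$. I would construct a feasible perturbation that strictly increases $\log\det(X_B)$. The natural move is to shift a small positive-semidefinite amount $\epsilon S$ (or $\epsilon$ times a rank-one piece of $S$) out of $X_{AA}$ and simultaneously absorb it: decreasing $X_{AA}$ by $\epsilon S$ keeps $\eqref{eq:SSDD:Qneg}$ feasible (we only reduce the Schur complement), but it changes $Q$ via $\eqref{eq:SSDD:Qsplit}$ in the $(2,2)$ block. To keep $Q$ — and hence the data constraints $\xi_i^* Q \xi_i \ge 0$ and the split structure — consistent, I would compensate by simultaneously enlarging $X_C$ by $\epsilon S$, which is still $\succeq 0$; this leaves the $(2,2)$ block $X_C - X_{AA}$ of $Q$ unchanged, so $Q$ is untouched and all data constraints still hold. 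The remaining obstacle is the trace normalization $\tr[X_C] = 1$: increasing $X_C$ by $\epsilon S$ violates it. I would fix this by rescaling: the whole SS-DD has a scaling freedom (scale $X_B, X_A, X_{AA}$ by $t$ and $X_C$ by $t$ keeps $Q$'s structure but not the split — so more carefully, I would rescale $X_B \mapsto t X_B$, $X_A \mapsto t X_A$, $X_{AA} \mapsto t(X_{AA}-\epsilon S)$, $X_C \mapsto t(X_C + \epsilon S)$ with $t = 1/\tr[X_C+\epsilon S] = 1/(1+\epsilon\,\tr S) < 1$, and check $\eqref{eq:SSDD:Qsplit}$ forces $Q \mapsto tQ$, which preserves $\xi_i^* Q \xi_i \ge 0$).

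Third, I would evaluate the net effect on the cost. The new $X_B$ is $t X_B$ with $t = 1/(1+\epsilon\,\tr S)$, so $\log\det(t X_B) = \log\det X_B + n_x \log t = \log\det X_B - n_x\log(1+\epsilon\,\tr S)$, which is a \emph{decrease} — so this particular trade goes the wrong way. This tells me the correct perturbation is the reverse: I should \emph{increase} $X_{AA}$ is not possible (would break $\eqref{eq:SSDD:Qneg}$), so instead I would exploit the slack to \emph{increase} $X_B$ directly: since $X_{AA} \succ X_A^T X_B^{-1} X_A$ strictly, I can replace $X_B$ by $X_B + \epsilon I$ for small $\epsilon>0$ and still satisfy $\eqref{eq:SSDD:Qneg}$ (the Schur complement $X_{AA} - X_A^T(X_B+\epsilon I)^{-1}X_A$ stays $\succeq 0$ by continuity), then re-split: $\eqref{eq:SSDD:Qsplit}$ forces the new $Q$'s $(1,1)$ block to be $-(X_B+\epsilon I)$, so $Q$ changes, and I must re-examine the data constraints. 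This is the real crux: to keep $\xi_i^* Q \xi_i \ge 0$ I cannot freely change $Q$.

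So the cleanest route, which I would actually pursue, is the following. Fix $Q$ and $X_C$ at their optimal values; then $X_B, X_A, X_{AA}$ are constrained only by $\eqref{eq:SSDD:Qsplit}$ — which pins down $X_B = -Q_{11}$ and $X_A = Q_{12}$ exactly — and by $\eqref{eq:SSDD:Qneg}$, i.e. $X_{AA} \succeq X_A^T X_B^{-1} X_A$, together with $X_C - X_{AA} = Q_{22}$ which pins $X_{AA} = X_C - Q_{22}$ exactly. Hence \emph{all} SS-DD variables are uniquely determined by $(Q, X_C)$, and $\eqref{eq:SSDD:Qneg}$ becomes the single constraint $X_C - Q_{22} \succeq Q_{12}^T(-Q_{11})^{-1}Q_{12}$. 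Now $\det(X_B) = \det(-Q_{11})$ depends only on $Q$. The argument then is: if at an optimum the Schur complement is not tight, I perturb $Q_{11} \mapsto Q_{11} - \epsilon I$ (making $-Q_{11}$ larger, hence $\det$ larger); I must keep the data constraints $\xi_i^* Q \xi_i \ge 0$, which I arrange by a compensating increase, e.g. $Q_{22} \mapsto Q_{22} + \delta I$ — but data constraints involve $y_i^*(-\epsilon I)y_i + \ldots$ with a definite sign, so instead I localize: perturb $Q$ by $-\epsilon\, \eta\eta^*$ along a direction $\eta$ orthogonal to... — this is getting delicate, and \textbf{this balancing of the $\det$-gain against feasibility of the $N$ data inequalities is the main obstacle}. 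I expect the resolution to use that the data constraints are \emph{non-strict} inequalities with a feasible interior (or to argue that if they are all tight the optimum is still characterized by a rank/complementary-slackness condition via the KKT conditions of Problem~\ref{prob:degenerateQIP}), and that the $\log\det$ barrier's gradient blows up as $X_B$ degenerates, forcing the slack in $\eqref{eq:SSDD:Qneg}$ to zero. I would therefore, as a fallback, prove the proposition via \textbf{KKT / complementary slackness}: write the Lagrangian, observe that the multiplier for $\eqref{eq:SSDD:Qneg}$ must be rank-deficient in a way that forces $Q' $ to act like $\begin{pmatrix}X_B&-X_A\end{pmatrix}^T X_B^{-1}\begin{pmatrix}X_B&-X_A\end{pmatrix}$ on the relevant subspace, which is exactly $\eqref{eq:SSDD:special}$.
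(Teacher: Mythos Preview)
Your first perturbation is the right move, but a sign error sends you off course for the rest of the proposal. When you decrease $X_{AA}$ by $\epsilon S$, the $(2,2)$ block of $Q$ becomes $X_C-(X_{AA}-\epsilon S)=(X_C-X_{AA})+\epsilon S$; to leave $Q$ unchanged you must \emph{decrease} $X_C$ by $\epsilon S$, not increase it. With the corrected sign, $\tr[X_C-\epsilon S]=1-\epsilon\,\tr S<1$, so the normalizing factor is $t=1/(1-\epsilon\,\tr S)>1$, and $\log\det(tX_B)=\log\det X_B+n_y\log t$ strictly \emph{increases} --- exactly the contradiction you wanted. (One residual technicality: you need $X_C-\epsilon S\succeq 0$; using a single rank-one piece $v\lambda v^T$ of $S$ and small $\epsilon$ handles this whenever $X_C$ is nonsingular.) Your subsequent attempts --- increasing $X_B$ directly, perturbing $Q_{11}$, falling back on KKT --- are all unnecessary detours caused by this slip.

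The paper's argument is a close cousin of your corrected first attempt but even lighter: it reduces $X_{AA}$ by a rank-one piece $v\lambda v^T$ and does \emph{not} touch $X_C$ at all. The $(2,2)$ block of $Q$ then grows by $v\lambda v^T$, so every data inequality picks up non-negative slack $x_i^*v\lambda v^Tx_i$; the paper then argues that, since only the data inequalities cap the objective from above (given the trace normalization), this slack permits a strictly larger $\log\det X_B$. That last step is left somewhat informal in the paper, and your corrected trace-and-rescale maneuver is in fact a clean way to make it explicit --- so once the sign is fixed, your route is arguably the more complete of the two.
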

\begin{proof}
	In the general case where \eqref{eq:SSDD:special} does not hold, we can define an equation error matrix
	\begin{equation}
	\tilde X_{AA} \triangleq X_{AA}-X_{A}^TX_B^{-1}X_{A} \succeq 0,
	\end{equation}
	which is p.s.d. since it is a Schur complement of $Q^\prime$, the matrix in \eqref{eq:SSDD:Qneg}. Suppose $\tilde X_{AA}$ has a non-zero (real) eigenvalue $\lambda>0$ and corresponding (real-valued) eigenvector $v$. Now consider another potential solution identical in all ways but one to the previous solution: $X_{AA}^\prime = X_{AA}-v\lambda v^T$. Since the rest of the solution is unchanged, the new solution trivially satisfies all constraints which do not involve $X_{AA}$, leaving \eqref{eq:SSDD:Qneg}---satisfied by Schur complement since $\tilde X_{AA}-v\lambda v^T\succeq0$, and the inclusion inequalities, which the new solution relaxes to (recall that $x_i$ is the lower part of $\xi_i$):
	\begin{equation}
	0\leq \xi_i^*Q\xi_i+x_i^* v \lambda v^Tx_i. \label{eq:special_prop_check}
	\end{equation}
	Relaxing constraints increases the objective function for our maximization problem. Assuming that the objective was previously bounded by the inclusion inequalities in all the various ways it could increase, at least one of them must have been relaxed by the change (albeit potentially through re-arrangement of the $X_C$ matrix), demonstrating that a higher objective solution must exist---a contradiction of the premise that optimal solutions can have non-zero equation error for equation \eqref{eq:SSDD:special}.
\end{proof}

\begin{remark}[Trivial Solution]
	A trivial solution to the constraints always exists, with $Q^\prime=0$, $X_C=\lambda I\ :\ \tr[X_C]=1$. This is the worst possible solution, since it represents an inclusion with infinite uncertainty magnitude, and it has infinitely negative optimization function value.
\end{remark}

\begin{remark}[Linear Solutions]
	If there are insufficient data points, or if the data points share a perfect linear relationship, $X_B$ will have an unbounded eigenvalue. In this situation, the optimization function value will be infinite and the solution\footnote{if one is returned at all---data sets like this typically cause numerical solvers to fail.} will not be unique.
\end{remark}


\section{A Geometric Notion of Cone Width for Degenerate Quadric ``Cones''} \label{sec:width}

The set of all points $(x,y)$ satisfying the quadratic inequality \eqref{eq:quadric_form} has a geometric interpretation---a shape which is technically a degenerate quadric. Quadrics in 2D real space are familiar to many: non-degenerate varieties include ellipses, hyperbola, and circles; but cones are degenerate. Degenerate quadrics are a general class of hyperdimensional shapes: they are described by a symmetric matrix quadratic form inequality relative to zero. Non-degenerate quadrics are similar, but with this inequality relative to some non-zero constant. With some positive and some negative eigenvalues (separated in the SS-DD), our degenerate quadrics have a useful analog in the simple 2D cone. 

\begin{definition}[Cone width for a toy cone]
	Consider the following 2D cone in real scalar $x$, $y$ space: $|y-Ax|\leq r(x)=w|x|$. We call $r(x)$ the ``radius'' of the cone opening as a function of $x$, and $w$ the ``width'' of this cone. We can equivalently define this width as $\sqrt{\mathrm E(r(x)^2)}$ if $x\sim \mathcal N(0,1)$, where $\mathrm E$ is expected value, and $\mathcal N(0,1)$ is the normal distribution with mean $0$ and variance 1.
\end{definition}
\begin{proof}
	$
	\sqrt{\mathrm E\left(r(x)^2\right)}=\sqrt{w^2\mathrm E(|x|^2)}=w.
	$
\end{proof}


\begin{definition}[Characteristic radius of a cross section]
	Consider a special case SS-DD satisfying \eqref{eq:SSDD:Qsplit}--\eqref{eq:SSDD:assignment} such that it is equivalent to an inclusion. If we specify a particular input $x$, the space of included $y$ can be interpreted as a geometric shape: a hyper-ellipsoid. The characteristic radius $R(x)$ of this cross section is defined as the radius of the hyper-ball that has equal hyper-volume to this hyper-ellipsoid. The hyper ellipsoid can be described as
	\begin{equation}
	(y-Ax)^*X_B (y-Ax) \leq G^2(x),\quad G(x)\triangleq \sqrt{x^* X_C x};
	\end{equation}
	and the characteristic radius,
	$
	R(x) = G(x)$ $(\det(X_B^{-1}))^{\frac 1 {2 n_y}}, 
	$
	 where $y\in \mathbb C^{n_y}$.
\end{definition}

\begin{definition}[Generalized Cone Width]
	The {generalized cone width} is the square root of the expected value of the squared characteristic radius given inputs drawn from the standard complex multivariate normal distribution. That is, \begin{equation}W \triangleq \sqrt{\mathrm E\left(R^2(x)\right)}\quad \mathrm{with} \quad \mathrm E\left(xx^*\right)=I.\end{equation}
\end{definition}

\begin{theorem}[Generalized Cone Width]\label{thm:W}
	The generalized cone width of a special case SS-DD form satisfying \eqref{eq:SSDD:Qsplit}--\eqref{eq:SSDD:assignment} \begin{equation}W=(\det{\left(X_B^{-1}\right)})^{\frac{1}{2n_y}}\sqrt{\tr \left( X_C\right)}.\label{eq:gcw}\end{equation}
\end{theorem}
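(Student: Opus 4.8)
The plan is to unwind the chain of definitions and reduce the claim to the elementary identity $\mathrm E(x^* X_C x) = \tr(X_C)$ for the standard complex normal. First I would substitute the characteristic radius $R(x) = G(x)\,(\det(X_B^{-1}))^{1/(2n_y)}$ into $W = \sqrt{\mathrm E(R^2(x))}$, so that
\begin{equation}
W^2 = \mathrm E\!\left( G^2(x)\, (\det(X_B^{-1}))^{1/n_y} \right) = (\det(X_B^{-1}))^{1/n_y}\, \mathrm E\!\left( x^* X_C x \right),
\end{equation}
where the deterministic factor $(\det(X_B^{-1}))^{1/n_y}$ is pulled out of the expectation because it does not depend on $x$, and $G^2(x) = x^* X_C x$ by the definition of $G$.

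Next I would evaluate $\mathrm E(x^* X_C x)$ using the scalar-trace trick: $x^* X_C x = \tr(x^* X_C x) = \tr(X_C\, x x^*)$, so by linearity of trace and expectation, $\mathrm E(x^* X_C x) = \tr\!\left( X_C\, \mathrm E(x x^*) \right) = \tr(X_C)$, using the hypothesis $\mathrm E(x x^*) = I$. Substituting back gives $W^2 = (\det(X_B^{-1}))^{1/n_y}\,\tr(X_C)$, and taking the (nonnegative) square root yields $W = (\det(X_B^{-1}))^{1/(2n_y)}\sqrt{\tr(X_C)}$, which is \eqref{eq:gcw}.

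There is essentially no hard step here; the only thing requiring care is bookkeeping, namely that the volumetric normalization relating the hyper-ellipsoid to its characteristic radius has already been absorbed into the Definition of $R(x)$, so the theorem is really just the observation that the mean-squared characteristic radius factors cleanly. If I wanted to be thorough I would also remark that $X_B \succ 0$ (hence $\det(X_B^{-1})$ is well-defined and positive) holds for any SS-DD satisfying \eqref{eq:SSDD:special} with the inclusion interpretation, since $X_B = B^{-T}B^{-1}$ with $B$ invertible; and that $\tr(X_C) \ge 0$ since $X_C \succeq 0$, so $W$ is a well-defined nonnegative real number.
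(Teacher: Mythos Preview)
Your proof is correct and follows essentially the same route as the paper: pull the deterministic factor $(\det(X_B^{-1}))^{1/n_y}$ out of the expectation, then evaluate $\mathrm E(x^* X_C x)$ via the trace--expectation swap with $\mathrm E(xx^*)=I$. The only difference is that you add well-definedness remarks about $X_B\succ0$ and $\tr(X_C)\ge0$, which the paper omits.
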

\begin{proof}
	The width,
	\begin{align}
	W = \sqrt{\mathrm E\left(R^2(x)\right)}&=(\det(X_B^{-1}))^{\frac 1 {2n_y}}\sqrt{\mathrm E(G^2(x))};\\
	\mathrm E\left(G^2(x)\right)&=\mathrm E\left(x^* X_C x\right) \\&= \tr[ X_C \mathrm E\left(xx^*\right)]=\tr[ X_C].
	\end{align}
	Substitution yields \eqref{eq:gcw}.
\end{proof}

We use the following corollary to avoid having both $X_C$ and $X_B$ in the cost function of the QIP.
\begin{corollary} \label{corr:logwidth}
	If $\tr (X_C)=1$,
	\begin{equation}
	\log(W)= -\frac1{2n_y}\log(\det(X_B))
	\end{equation}
\end{corollary}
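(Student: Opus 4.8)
The plan is to derive this corollary directly from Theorem~\ref{thm:W}, since it is essentially a specialization of the generalized cone width formula under the normalization $\tr(X_C)=1$. First I would recall the expression $W=(\det(X_B^{-1}))^{\frac{1}{2n_y}}\sqrt{\tr(X_C)}$ established there, noting that it applies precisely to a special-case SS-DD satisfying \eqref{eq:SSDD:Qsplit}--\eqref{eq:SSDD:assignment}, which is the setting in which $X_B=B^{-T}B^{-1}$ is positive definite and hence invertible with $\det(X_B)>0$, so that $\log(\det(X_B))$ is well defined.

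Next I would substitute the hypothesis $\tr(X_C)=1$, which collapses the factor $\sqrt{\tr(X_C)}$ to $1$, leaving $W=(\det(X_B^{-1}))^{\frac{1}{2n_y}}$. Taking logarithms of both sides gives $\log(W)=\frac{1}{2n_y}\log(\det(X_B^{-1}))$, and then using $\det(X_B^{-1})=(\det(X_B))^{-1}$ together with $\log(t^{-1})=-\log(t)$ yields $\log(W)=-\frac{1}{2n_y}\log(\det(X_B))$, as claimed.

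There is no real obstacle here: the only thing to be careful about is that the identity is invoked only for SS-DD instances that genuinely correspond to an inclusion (so that Theorem~\ref{thm:W} applies and $X_B$ is positive definite), which is exactly the regime guaranteed by Proposition~\ref{prop:specialness} for finite-cost solutions of Problem~\ref{prob:degenerateQIP}. The corollary then justifies replacing the width-like objective $\log(W)$ by the single-variable expression $-\tfrac{1}{2n_y}\log(\det(X_B))$ used in the QIP cost, with $X_C$ entering only through the scale-fixing constraint $\tr(X_C)=1$.
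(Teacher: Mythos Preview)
Your proposal is correct and follows essentially the same route as the paper: invoke the width formula from Theorem~\ref{thm:W}, substitute $\tr(X_C)=1$, take logarithms, and use $\det(X_B^{-1})=(\det X_B)^{-1}$. Your added remarks about positive-definiteness of $X_B$ and the connection to Proposition~\ref{prop:specialness} are accurate but go slightly beyond what the paper records for this short corollary.
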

\begin{proof}
	Using \eqref{eq:gcw}, $\log(W)= \frac{1}{2n_y}\log\left(\det\left(X_B^{-1}\right)\sqrt{1}\right) =\frac{-1}{2n_y}\log\left(\det\left(X_B\right)\right)$.
\end{proof}
The maximization objective $\log (\det (X_B))$ in Prob.~\ref{prob:degenerateQIP} is a negative multiple of this expression for generalized cone width (and therefore minimizes it). As will be shown in Corollary \ref{cor:scale}, this trace constraint specifies a free scale parameter in the SS-DD.

\section{Proof of Consistent Estimation} \label{sec:consistency}
In this section we use properties of the generalized cone width to prove that, when data is generated from a norm-bounded linear inclusion, the estimates from the degenerate QIP converge, in a certain sense, to equivalence with the true inclusion. This is essentially a property of the choice of cost function, and could be otherwise stated ``every norm-bounded inclusion is optimal for the data it produces''.\footnote{Without this property one could imagine a cost function that always opts for sphere-like models, scaled identity matrices for $B$ and $C$: any information about the shape of the uncertainty in the data would be ignored.} We offer this proof only for invertible $C$ matrices, though we conjecture the proof could be made to work using the pseudo-inverse\footnote{Our simulation example seems to be consistent in the absence of an invertible $C$ matrix.}.

Norm-bounded linear inclusions are functionally equivalent (include the same points) up to an orthogonal pre-multiplication of $C$, an orthogonal post-multiplication of $B$, and reciprocal scaling of $C$ and $B$, and the cone width does not change due to any such alteration.
\begin{corollary}
	Generalized cone width is the product of the geometric mean of the singular values of $B$, and the 2-norm of the singular values (the Frobenius norm) of $C$
\end{corollary}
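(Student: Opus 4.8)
The plan is to derive this corollary by direct substitution of the special-case SS-DD assignment \eqref{eq:SSDD:assignment} into the closed form \eqref{eq:gcw} of Theorem~\ref{thm:W}, followed by re-expressing the two matrix functionals $\det(X_B^{-1})$ and $\tr(X_C)$ in terms of singular values. The key dimensional fact to keep in mind is that $B$ is a square invertible matrix acting on $y\in\mathbb C^{n_y}$, hence $n_y\times n_y$; this is exactly what makes the exponent $\tfrac{1}{2n_y}$ in \eqref{eq:gcw} collapse to a geometric mean over $n_y$ numbers.

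First I would treat the $X_B$ factor: from \eqref{eq:SSDD:assignment}, $X_B=B^{-T}B^{-1}$, so $X_B^{-1}=BB^T$ and $\det(X_B^{-1})=\det(B)^2=\prod_{i=1}^{n_y}\sigma_i(B)^2$, where $\sigma_i(B)$ denotes the $i$-th singular value of $B$. Raising to the $\tfrac{1}{2n_y}$ power gives
\[
(\det(X_B^{-1}))^{\frac{1}{2n_y}}=\Big(\prod_{i=1}^{n_y}\sigma_i(B)\Big)^{\frac1{n_y}}=\GM\{\sigma(B)\}.
\]
Next, for the $X_C$ factor: again from \eqref{eq:SSDD:assignment}, $X_C=C^TC$, so $\tr(X_C)=\tr(C^TC)=\sum_i\sigma_i(C)^2=\|C\|_{\mathrm{Frobenius}}^2$, hence $\sqrt{\tr(X_C)}=\|C\|_{\mathrm{Frobenius}}$, which is precisely the $2$-norm of the vector of singular values of $C$. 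Multiplying the two factors in \eqref{eq:gcw} yields $W=\GM\{\sigma(B)\}\,\|C\|_{\mathrm{Frobenius}}$, the claim.

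As a consistency check with the remark preceding the statement, I would observe that this closed form is manifestly invariant under the equivalences mentioned there: orthogonal pre-/post-multiplication leaves the singular values of $B$ and $C$ fixed, and under the reciprocal rescaling $B\mapsto\alpha B$, $C\mapsto\alpha^{-1}C$ one has $\GM\{\sigma(\alpha B)\}=\alpha\,\GM\{\sigma(B)\}$ and $\|\alpha^{-1}C\|_{\mathrm{Frobenius}}=\alpha^{-1}\|C\|_{\mathrm{Frobenius}}$, so the product is unchanged. I do not anticipate any genuine obstacle; the only subtlety is bookkeeping the size of $B$, so that the $2n_y$-th root of $\det(B)^2$ is really a geometric mean over $n_y$ singular values (and not something that also involves $n_x$), and recalling that both \eqref{eq:gcw} and the assignment \eqref{eq:SSDD:assignment} hold only in the special-case SS-DD regime in which $B$ and $C$ are defined.
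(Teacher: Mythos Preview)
Your argument is correct and essentially identical to the paper's: substitute $X_B^{-1}=BB^T$ and $X_C=C^TC$ from \eqref{eq:SSDD:assignment} into \eqref{eq:gcw}, then rewrite $\det(BB^T)$ and $\tr(C^TC)$ via singular values to obtain $W=\GM\{\sigma(B)\}\,\|C\|_{\mathrm{Frobenius}}$. The additional invariance check you append is not part of the paper's proof of this corollary (those invariances are treated separately), but it does no harm.
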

\begin{proof}
	Converting the model-set from standard form \eqref{eq:base_inclusion} to the special case of the SS-DD form \eqref{eq:SSDD:Qneg}--\eqref{eq:SSDD:special} and applying Thm.~\ref{thm:W},
	\begin{align}
	W &= (\det{BB^T})^{\frac1{2n_y}}\sqrt{\tr[C^TC]},\\
	&= \prod\limits_{\lambda \in \sigma(B)}\left(\lambda^{\frac{1}{n_y}}\right) \sqrt{\sum\limits_{\gamma \in \sigma(C)}\gamma^2}.
	\end{align}
	Where the spectrum of a matrix $\sigma(\cdot)$ is the set of the singular values of that matrix (with repetition). 
\end{proof}
Multiplication by a orthogonal matrix cannot change the singular values of a matrix, and cannot influence the cone width.

\begin{corollary} \label{cor:scale}
	The generalized cone width is invariant to scaling $C$ and $B$ by reciprocal values.
\end{corollary}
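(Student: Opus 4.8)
The plan is to substitute the reciprocal reparametrization $B\mapsto \alpha B$, $C\mapsto \alpha^{-1}C$ (for any nonzero real $\alpha$) directly into the closed-form width expression established just above, $W=(\det(BB^T))^{\frac1{2n_y}}\sqrt{\tr[C^TC]}$, and check that the two resulting scalar prefactors cancel. The determinant factor transforms as $(\det(\alpha^2 BB^T))^{\frac1{2n_y}}=(\alpha^{2n_y}\det(BB^T))^{\frac1{2n_y}}=\alpha\,(\det(BB^T))^{\frac1{2n_y}}$, where the exponent arithmetic uses that $B$ is square of size $n_y\times n_y$. The Frobenius factor is homogeneous of degree one in $C$, so $\sqrt{\tr[(\alpha^{-1}C)^T(\alpha^{-1}C)]}=\alpha^{-1}\sqrt{\tr[C^TC]}$. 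Multiplying, the $\alpha$ and $\alpha^{-1}$ cancel and $W$ is unchanged. Equivalently, one may push the scaling through to the SS-DD variables, where $X_B\mapsto\alpha^{-2}X_B$ and $X_C\mapsto\alpha^{-2}X_C$, and apply \eqref{eq:gcw} directly: $\det(X_B^{-1})$ picks up $\alpha^{2n_y}$ (hence $\alpha$ after the $\tfrac1{2n_y}$ power) and $\tr[X_C]$ picks up $\alpha^{-2}$ (hence $\alpha^{-1}$ after the square root).

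I would also point out that this invariance is not accidental: under the very same substitution the product $B\Delta C$ is literally unchanged, so the model-set \eqref{eq:base_inclusion}---and therefore every geometric quantity attached to it, including $W$---is unchanged; the corollary is simply the concrete verification of this fact at the level of the width formula. This in turn justifies the normalization $\tr[X_C]=1$ imposed in Prob.~\ref{prob:degenerateQIP}: it pins down the single remaining scalar degree of freedom $\alpha$ without altering the identified inclusion, which is the statement promised at the end of Sec.~\ref{sec:width}.

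There is no substantial obstacle here; the one point requiring care is matching the homogeneity degree of the determinant term (degree $2n_y$ inside, raised to the power $\tfrac1{2n_y}$) against the degree-one homogeneity of the Frobenius term---this is precisely why the compensating rescalings of $B$ and $C$ must be reciprocal (exponents $+1$ and $-1$) rather than, say, equal.
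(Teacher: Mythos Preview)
Your proof is correct and follows essentially the same approach as the paper: direct substitution of the reciprocal scaling into the closed-form width formula and observation that the scalar factors cancel. The paper's one-line proof uses the $\GM\{\sigma(B)\}\|C\|_{\mathrm F}$ form (with the roles of $\alpha$ and $\alpha^{-1}$ swapped relative to yours), while you work with the equivalent $(\det BB^T)^{1/2n_y}\sqrt{\tr C^TC}$ form and additionally supply the SS-DD and model-set-invariance viewpoints; the only cosmetic slip is that the factors should strictly be $|\alpha|$ and $|\alpha|^{-1}$, which of course still cancel.
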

\begin{proof}
	$
	W^\prime = \GM\{\sigma(\alpha^{-1}B)\} \|\alpha C\|_{\mathrm{Frobenius}} =  \frac{\alpha}{\alpha} W.
	$
\end{proof}

The cone width also satisfies an intuitive notion that a cone can only contain another cone if it is wider. In terms of inclusions, this geometric containment becomes a concept of implication: if one inclusion is implied by a second, this is equivalent to saying that the second inclusion is geometrically contained within the first.


\begin{lemma}
	An inclusion  $y\in\{(A_o+B_o\Delta C_o)x \ : \Delta^*\Delta\preceq I\}$ (subscript $o$ for outer) contains another inclusion $y\in\{(A_i+B_i\Delta C_i)x \ : \Delta^*\Delta\preceq I\}$ (subscript $i$ for inner) if and only if
	\begin{equation}
	\|\tilde A +\tilde B \Delta \tilde C\|\leq 1\ \ \forall\ \Delta \ |\ \|\Delta\|\leq 1, \label{eq:inclusion_norm}
	\end{equation}
	with $\tilde B \triangleq B_o^{-1}B_i$, $\tilde A \triangleq B_o^{-1}(A_i-A_o)C_o^{-1}$, and $\tilde C \triangleq C_iC_o^{-1}$.
\end{lemma}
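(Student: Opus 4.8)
The plan is to reduce the set-containment statement to a fibre-wise comparison (fix the input $x$, compare the admissible sets of outputs) and then to absorb the invertible matrices $B_o$ and $C_o$ by a linear change of variables. First I would record the elementary fact, already contained in the proof of Proposition~\ref{prop:quadric}: for a fixed $x$, the fibre $\{(A+B\Delta C)x\ :\ \|\Delta\|\leq 1\}$ equals $\{Ax+Bz\ :\ \|z\|\leq\|Cx\|\}$. The direction ``$\subseteq$'' is immediate from $\|\Delta Cx\|\leq\|Cx\|$, and ``$\supseteq$'' follows from the rank-one choice $\Delta=z(Cx)^*/\|Cx\|^2$ when $Cx\neq0$, and is trivial when $Cx=0$. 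Since $B$ is invertible this fibre is an ellipsoid centered at $Ax$ with shape $B$ and ``radius'' $\|Cx\|$. The outer inclusion contains the inner one (as sets of pairs $(x,y)$) if and only if, for every $x$, the inner fibre is contained in the outer fibre.

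Next I would rewrite this fibre-wise containment. A generic point of the inner fibre is $y=A_ix+B_iz$ with $\|z\|\leq\|C_ix\|$, and by Proposition~\ref{prop:quadric} it lies in the outer fibre exactly when $\|B_o^{-1}(y-A_ox)\|\leq\|C_ox\|$, i.e. when $\|B_o^{-1}(A_i-A_o)x+B_o^{-1}B_iz\|\leq\|C_ox\|$. Using invertibility of $C_o$, substitute $u=C_ox$, so that $x=C_o^{-1}u$, $B_o^{-1}(A_i-A_o)x=\tilde Au$, $\|C_ix\|=\|\tilde Cu\|$, and $\|C_ox\|=\|u\|$. Containment of the inner inclusion in the outer one is therefore equivalent to: for all $u$ and all $z$ with $\|z\|\leq\|\tilde Cu\|$ one has $\|\tilde Au+\tilde Bz\|\leq\|u\|$.

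Finally I would re-apply the fibre parametrization in the tilde variables. For fixed $u$ the vectors $z$ with $\|z\|\leq\|\tilde Cu\|$ are exactly the vectors $\Delta\tilde Cu$ with $\|\Delta\|\leq1$ (again by the rank-one construction, trivially when $\tilde Cu=0$), so the previous condition becomes: for all $u$ and all $\Delta$ with $\|\Delta\|\leq1$, $\|(\tilde A+\tilde B\Delta\tilde C)u\|\leq\|u\|$. Since both quantifiers are universal they commute, and ``for all $u$, $\|(\tilde A+\tilde B\Delta\tilde C)u\|\leq\|u\|$'' is by definition the statement $\|\tilde A+\tilde B\Delta\tilde C\|\leq1$. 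This is precisely \eqref{eq:inclusion_norm}, and since every step above is an equivalence the converse implication is obtained simultaneously.

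The argument is short, so there is no real obstacle, only two points that require care. First, the degenerate fibres: when $C_ix=0$ (equivalently $\tilde Cu=0$) the inner fibre collapses to the single point $A_ix$, which must still be shown to lie in the outer fibre; this is handled automatically because the equivalence with the ``for all $\Delta$'' form uses only $\Delta\cdot0=0$. Second, one should be explicit that for each fixed $u$ the map $\Delta\mapsto\Delta\tilde Cu$ surjects onto the closed ball of radius $\|\tilde Cu\|$, so that trading the quantifier over $z$ for the quantifier over $\Delta$ loses no generality and the commutation of quantifiers is legitimate.
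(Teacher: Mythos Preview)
Your proof is correct and reaches the same conclusion by the same reduction to the tilde variables, but organized differently from the paper. The paper argues directly at the matrix level: given $\Delta$ with $\|\Delta\|\leq 1$, the containment forces the existence of $\Delta_o$ with $\|\Delta_o\|\leq 1$ satisfying $A_o+B_o\Delta_oC_o=A_i+B_i\Delta C_i$, and since $B_o,C_o$ are invertible this matrix equation solves uniquely to $\Delta_o=\tilde A+\tilde B\Delta\tilde C$, whence \eqref{eq:inclusion_norm}. You instead work fibre-by-fibre, replacing the $\Delta$-parametrization by the explicit ball description $\{Ax+Bz:\|z\|\leq\|Cx\|\}$, applying the change of variable $u=C_ox$, and only at the end recovering the operator-norm statement by commuting the two universal quantifiers. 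Your route is slightly longer but more explicit about the step the paper compresses into ``This is because \eqref{eq:includes} holds for all $x$''; in particular you make visible why an $x$-dependent $\Delta_o$ suffices to bound the operator norm of $\tilde A+\tilde B\Delta\tilde C$, and you handle the degenerate fibre $C_ix=0$ cleanly.
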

\begin{proof}
	Geometric containment of inclusion shapes is logical implication of inclusion inequalities:
	\begin{gather}
	\left\{
	\begin{matrix}
	y = A_ox + B_o\Delta_o C_o x,\\
	\mathrm{with}\ \Delta_o^*\Delta_o \preceq I 
	\end{matrix}
	\right.\impliedby \left\{
	\begin{matrix}
	y = A_ix + B_i\Delta C_i x,\\
	\mathrm{with}\ \Delta^*\Delta \preceq I. 
	\end{matrix}
	\right.\label{eq:includes}
	\end{gather}
	The two equalities define a relationship between $\Delta$ and $\Delta_o$, which we can equivalently state:
	\begin{gather}
	\left\{
	\begin{matrix}
	\Delta_o^*\Delta_o \preceq I \impliedby \Delta^*\Delta \preceq I,  \\
	A_o + B_o\Delta_o C_o = A_i + B_i\Delta C_i.
	\end{matrix}
	\right. \label{eq:bracket_implication}
	\end{gather} 
	This is because \eqref{eq:includes} holds for all $x$.
	By algebra, $	\Delta_o =  \tilde A + \tilde B \Delta \tilde C$.
	Re-stating the implication in \eqref{eq:bracket_implication} as $\|\Delta_o\|\leq 1 \ \forall\ \Delta\ |\ \|\Delta\|\leq 1$ we get \eqref{eq:inclusion_norm}.
\end{proof}

This eventually leads to a necessary condition for inclusion based on a cone width inequality. An intermediate necessary condition uses the singular values of the $B$ and $C$ matrices.

\begin{lemma} 
	If $y\in\{(A_o+B_o\Delta C_o)x \ : \Delta^*\Delta\preceq I\}$ includes $y$ $\in\{(A_i+B_i\Delta C_i)x \ : \Delta^*\Delta\preceq I\}$ then $\sigma_{\max}(\tilde B)\sigma_{\max}(\tilde C)\leq 1$.\label{lem:inclusion}
\end{lemma}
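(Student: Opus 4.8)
The plan is to invoke the preceding lemma, which tells us that the outer inclusion contains the inner one exactly when $\|\tilde A + \tilde B \Delta \tilde C\|\leq 1$ for every $\Delta$ with $\|\Delta\|\leq 1$. So it suffices to exhibit a single admissible $\Delta$ (or a small family of them) for which $\|\tilde B\Delta\tilde C\|$ is forced to equal $\sigma_{\max}(\tilde B)\,\sigma_{\max}(\tilde C)$, and then to argue that the $\tilde A$ term cannot help us escape the unit-norm bound. The natural choice is a rank-one $\Delta$ aligned with the dominant singular directions of $\tilde B$ and $\tilde C$.

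Concretely, I would let $u,v$ be unit left/right singular vectors of $\tilde B$ with $\tilde B v = \sigma_{\max}(\tilde B)\,u$, and $p,q$ unit left/right singular vectors of $\tilde C$ with $\tilde C^* p = \sigma_{\max}(\tilde C)\, q$ (equivalently $q^*\tilde C = \sigma_{\max}(\tilde C)\,p^*$). Set $\Delta \triangleq v p^*$, which is rank one with $\|\Delta\| = \|v\|\,\|p\| = 1$, hence admissible. Then
\begin{equation}
\tilde B \Delta \tilde C = \tilde B\, v\, p^* \tilde C = \sigma_{\max}(\tilde B)\, u\, (q^*\tilde C)^{**\,*}
= \sigma_{\max}(\tilde B)\,\sigma_{\max}(\tilde C)\, u q^*,
\end{equation}
wait—more carefully, $p^*\tilde C = (\tilde C^* p)^* = \sigma_{\max}(\tilde C)\, q^*$, so $\tilde B\Delta\tilde C = \sigma_{\max}(\tilde B)\,\sigma_{\max}(\tilde C)\, u q^*$, a rank-one matrix of norm exactly $\sigma_{\max}(\tilde B)\,\sigma_{\max}(\tilde C)$ since $u,q$ are unit vectors.

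It remains to remove the $\tilde A$ offset. Since $-\Delta$ is also admissible, the lemma gives both $\|\tilde A + \tilde B\Delta\tilde C\|\leq 1$ and $\|\tilde A - \tilde B\Delta\tilde C\|\leq 1$; subtracting and applying the triangle inequality yields $2\|\tilde B\Delta\tilde C\| \leq \|\tilde A + \tilde B\Delta\tilde C\| + \|\tilde A - \tilde B\Delta\tilde C\| \leq 2$, so $\sigma_{\max}(\tilde B)\,\sigma_{\max}(\tilde C) = \|\tilde B\Delta\tilde C\| \leq 1$. The only real obstacle I anticipate is bookkeeping with the singular-vector identities—ensuring the rank-one $\Delta$ genuinely realizes the \emph{product} of the two maximal singular values (rather than something smaller), and being careful that $u,v,p,q$ may be complex so that conjugate transposes, not transposes, appear throughout; the $\pm\Delta$ triangle-inequality step that kills $\tilde A$ is routine.
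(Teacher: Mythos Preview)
Your argument is correct. Both you and the paper build a rank-one $\Delta$ aligned with the dominant singular directions of $\tilde B$ and $\tilde C$, but the two proofs diverge in how they dispose of the $\tilde A$ offset. The paper works by contradiction: it fixes the sign of its rank-one $\Delta$ so that $\zeta_B^T\tilde A\zeta_C$ and $\zeta_B^T\tilde B\Delta\tilde C\zeta_C$ add constructively, then shows $|\zeta_B^T(\tilde A+\tilde B\Delta\tilde C)\zeta_C|\ge |\zeta_B^T\tilde A\zeta_C|+\sigma_{\max}^2(\tilde B)\sigma_{\max}^2(\tilde C)>1$. Your $\pm\Delta$ triangle-inequality trick is more elementary and, in fact, a bit more robust: the paper's constructed $\Delta=\tilde B^T\zeta_B\,\sign(\cdot)\,\zeta_C^T\tilde C^T$ has norm $\sigma_{\max}(\tilde B)\sigma_{\max}(\tilde C)$, which under the contradiction hypothesis exceeds $1$, so strictly speaking it is not admissible without an extra normalization step; your $\Delta=vp^*$ has unit norm by construction and avoids that wrinkle. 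The paper's route does buy a slightly sharper intermediate inequality (it also bounds $|\zeta_B^T\tilde A\zeta_C|$), which is reused later in Proposition~\ref{prop:equal_W}, whereas your averaging argument discards all information about $\tilde A$.
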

\begin{proof}
	Assume the contrary ($\sigma_{\max}(\tilde B)\sigma_{\max}(\tilde C)> 1$) and construct the following:
	\begin{equation}
	\Delta = \tilde B^T \zeta_B \sign(\zeta_B^T\tilde A \zeta_C)\zeta_C^T\tilde C^T,
	\end{equation}
	where $\zeta_B$ and $\zeta_C$ are (real-valued) unit eigenvectors for $\tilde B \tilde B^T$ and $\tilde C^T\tilde C$ corresponding to their respective maximum eigenvalues: $\lambda_{\max}(\tilde B\tilde B^T) \zeta_B = \tilde B \tilde B^T \zeta_B$,  $\lambda_{\max}(\tilde C^T \tilde C)\zeta_C = \tilde C^T\tilde C \zeta_C$, $\zeta_B^T\zeta_B=\zeta_C^T\zeta_C=1$. This choice of $\Delta$ leads to:
	\begin{gather}
	1\geq \|\tilde A +\tilde B \Delta \tilde C\|\geq |\zeta_B^T (\tilde A +\tilde B \Delta \tilde C)\zeta_C| \nonumber\\
	= |\zeta_B^T (\tilde A +\tilde B \tilde B^T \zeta_B \sign(\zeta_B^T\tilde A \zeta_C)\zeta_C^T\tilde C^T  \tilde C)\zeta_C|
	\nonumber \\
	= |\zeta_B^T\tilde A\zeta_C| +\lambda_{\max}(\tilde B \tilde B^T) \lambda_{\max}(\tilde C^T \tilde C), \label{eq:bound_with_p} \\
	\geq (\sigma_{\max}(\tilde B)\sigma_{\max}(\tilde C))^2 > 1.
	\end{gather}
	A contradiction, as desired. 
\end{proof}

\begin{proposition}
	If the outer inclusion contains the inner inclusion (with generalized cone widths $W_o$ and $W_i$, respectively), then $W_o\geq W_i$.
\end{proposition}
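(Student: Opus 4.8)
The plan is to reduce this cone-width inequality to the singular-value bound already proved in Lemma~\ref{lem:inclusion}. First I would express the two widths in the product form established just above, $W_o = \GM\{\sigma(B_o)\}\,\|C_o\|_{\mathrm{Frobenius}}$ and $W_i = \GM\{\sigma(B_i)\}\,\|C_i\|_{\mathrm{Frobenius}}$, using that $\GM\{\sigma(B)\} = |\det B|^{1/n_y}$ since $B$ is an invertible $n_y \times n_y$ matrix. Because $W_o$ and $W_i$ are invariant under the orthogonal and reciprocal-scaling freedoms in the representation (by Cor.~\ref{cor:scale} together with the orthogonal-invariance observations preceding it), I may pick any realizations of the two inclusions; in particular I can use the matrices $\tilde B \triangleq B_o^{-1}B_i$ and $\tilde C \triangleq C_i C_o^{-1}$ from \eqref{eq:inclusion_norm}, which give $B_i = B_o \tilde B$ and $C_i = \tilde C C_o$.

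Next I would substitute and bound factor by factor. Multiplicativity of the determinant gives $\GM\{\sigma(B_i)\} = |\det B_o|^{1/n_y}\,|\det \tilde B|^{1/n_y} = \GM\{\sigma(B_o)\}\,\GM\{\sigma(\tilde B)\}$, and the geometric mean of the $n_y$ singular values of $\tilde B$ never exceeds the largest, so $\GM\{\sigma(\tilde B)\} \le \sigma_{\max}(\tilde B)$. For the $C$ factor, submultiplicativity of the spectral norm against the Frobenius norm gives $\|C_i\|_{\mathrm{Frobenius}} = \|\tilde C C_o\|_{\mathrm{Frobenius}} \le \sigma_{\max}(\tilde C)\,\|C_o\|_{\mathrm{Frobenius}}$. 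Chaining these,
\begin{equation}
W_i \le \GM\{\sigma(B_o)\}\,\sigma_{\max}(\tilde B)\,\sigma_{\max}(\tilde C)\,\|C_o\|_{\mathrm{Frobenius}},
\end{equation}
and since containment forces $\sigma_{\max}(\tilde B)\,\sigma_{\max}(\tilde C) \le 1$ by Lemma~\ref{lem:inclusion}, the right-hand side is at most $\GM\{\sigma(B_o)\}\,\|C_o\|_{\mathrm{Frobenius}} = W_o$, which is the claim.

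I do not expect a genuine obstacle: all the structural work (the equivalence of containment with \eqref{eq:inclusion_norm}, and the resulting spectral bound) has already been done in the two lemmas, so the remaining task is just recognizing that $\GM\{\sigma(\tilde B)\} \le \sigma_{\max}(\tilde B)$ and the Frobenius-submultiplicativity estimate together let Lemma~\ref{lem:inclusion}'s product bound absorb the entire discrepancy between $W_i$ and $W_o$. The one point requiring a moment of care is invoking the representation-invariance of $W$, so that starting from the product form with these particular $\tilde B$, $\tilde C$ is legitimate; that is exactly what the preceding corollaries guarantee. If one wished to avoid citing Lemma~\ref{lem:inclusion}, one could instead re-run its rank-one worst-case $\Delta$ construction inline, but reusing the lemma is cleaner.
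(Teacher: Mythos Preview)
Your argument is correct. The two key ingredients---Lemma~\ref{lem:inclusion}'s bound $\sigma_{\max}(\tilde B)\sigma_{\max}(\tilde C)\le 1$ and the inequality $\GM\{\sigma(\tilde B)\}\le\sigma_{\max}(\tilde B)$---are exactly the ones the paper uses, so at the level of content the proofs agree. The packaging, however, differs: the paper argues by contradiction, rewriting $W_i>W_o$ as a positive trace
\[
\tr\!\left(C_o^T\bigl(\lambda_{\max}(\tilde B\tilde B^T)\,\tilde C^T\tilde C-I\bigr)C_o\right)>0
\]
and then observing that Lemma~\ref{lem:inclusion} forces the inner matrix to be negative semidefinite. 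You instead work directly at the scalar level, bounding the two factors of $W_i$ separately via $|\det B_i|^{1/n_y}=|\det B_o|^{1/n_y}|\det\tilde B|^{1/n_y}$ and the Frobenius submultiplicativity $\|\tilde C C_o\|_{\mathrm F}\le\sigma_{\max}(\tilde C)\|C_o\|_{\mathrm F}$. Your route is a bit more elementary and avoids the detour through a contradiction; the paper's trace formulation, on the other hand, is set up so that the equality case (Prop.~\ref{prop:equal_W}) falls out of the same computation.

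One small remark: the paragraph about representation-invariance is not actually needed. You never change representatives---$\tilde B$ and $\tilde C$ are defined from whatever $B_o,C_o,B_i,C_i$ you were handed, and every subsequent step is an identity or inequality in those fixed matrices. The invariance results are relevant only if one wanted to first normalize, which your proof does not do.
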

\begin{proof}
	Assume the contrary ($W_i>W_o$), 
	\begin{gather}
	\sqrt[2n_y]{\det{B_iB_i^T}}\|C_i\|_{\mathrm F}
	>
	\sqrt[2n_y]{\det{B_oB_o^T}}\|C_o\|_{\mathrm F}, \\
	\frac{\sqrt[n_y]{\det{B_iB_i^T}}}{\sqrt[n_y]{\det{B_oB_o^T}}}\|C_i\|_{\mathrm F}^2
	>
	\|C_o\|_{\mathrm F}^2, \\
	\tr\left(\sqrt[n_y]{\det(B_o^{-1}B_iB_i^TB_o^{-T})}C_i^TC_i - C_o^TC_o\right)>0,\label{eq:trace_interesting_matrix}\\
	\tr\left(C_o^T\left(\lambda_{\max}(\tilde B\tilde B^T)\tilde C^T\tilde C - I \right)C_o\right)>0 .
	\end{gather}
	However by Lemma~\ref{lem:inclusion}, the argument of trace in the above inequality is negative semi definite, so it can not have a positive trace---a contradiction.
\end{proof}

The special case where the cone widths reach equality marks the residual set of a Lyapunov-like argument in the consistency proof. Conveniently, this residual set has only one element.

\begin{proposition} \label{prop:equal_W}
	If $W_o=W_i$ then our two inclusions are equivalent in the sense that $A_o=A_i$, and $\exists\ \lambda>0 \ : \lambda B_oB_o^T = B_iB_i^T, \ C_o^TC_o=\lambda C_i^TC_i$.
\end{proposition}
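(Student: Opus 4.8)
\textit{Proof proposal.} The plan is to re-run the argument behind the preceding proposition with $W_o=W_i$ in place of the strict inequality, but now to \emph{harvest two equality conditions} from it rather than reach a contradiction; the $A$-matrices are then pinned down separately using the containment lemma. Throughout I read the hypothesis in its running context, where the outer inclusion contains the inner one, so that Lemma~\ref{lem:inclusion} and the containment lemma apply.

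First, running the algebra of the previous proof with equality gives the scalar identity $\tr(C_o^TC_o)=(\det(\tilde B\tilde B^T))^{1/n_y}\,\tr(C_i^TC_i)$. Set $\lambda\triangleq\lambda_{\max}(\tilde B\tilde B^T)>0$ and $M\triangleq\lambda\,\tilde C^T\tilde C-I$, and note $C_i^TC_i=C_o^T\tilde C^T\tilde C\,C_o$. Since the geometric mean $(\det(\tilde B\tilde B^T))^{1/n_y}$ is at most $\lambda$ and $\tr(C_i^TC_i)>0$ (as $C_i$ is invertible), the identity gives $\tr(C_o^TMC_o)\ge0$; but Lemma~\ref{lem:inclusion} gives $\lambda\,\lambda_{\max}(\tilde C^T\tilde C)=\sigma_{\max}(\tilde B)^2\sigma_{\max}(\tilde C)^2\le1$, so $M\preceq0$ and hence $\tr(C_o^TMC_o)\le0$. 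Thus $\tr(C_o^TMC_o)=0$; being symmetric, negative semidefinite and trace-free, $C_o^TMC_o$ is the zero matrix, and invertibility of $C_o$ forces $M=0$, i.e.\ $\tilde C^T\tilde C=\lambda^{-1}I$, equivalently $C_o^TC_o=\lambda C_i^TC_i$. With $M=0$ both inequality links above become tight, so the scalar identity reads $\lambda\,\tr(C_i^TC_i)=(\det(\tilde B\tilde B^T))^{1/n_y}\tr(C_i^TC_i)$; dividing by $\tr(C_i^TC_i)>0$ gives $(\det(\tilde B\tilde B^T))^{1/n_y}=\lambda_{\max}(\tilde B\tilde B^T)$. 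A geometric mean equals the maximum only when all entries coincide, so every eigenvalue of $\tilde B\tilde B^T$ equals $\lambda$, i.e.\ $\tilde B\tilde B^T=\lambda I$, which unwinds to $B_iB_i^T=\lambda B_oB_o^T$.

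Finally, to show $A_o=A_i$, use $\tilde B\tilde B^T=\lambda I$ and $\tilde C^T\tilde C=\lambda^{-1}I$ to write $\tilde B=\sqrt\lambda\,O_B$ and $\tilde C=\lambda^{-1/2}O_C$ with $O_B,O_C$ orthogonal; then $\tilde B\Delta\tilde C=O_B\Delta O_C$ ranges over the whole operator-norm unit ball as $\Delta$ does, so the containment lemma reads $\|\tilde A+\Delta'\|\le1$ for every $\Delta'$ with $\|\Delta'\|\le1$. If $\tilde A\neq0$, choosing $\Delta'$ to be a rank-one unit matrix aligned, with matching phase, to a maximal singular triple of $\tilde A$ yields $\|\tilde A+\Delta'\|\ge\sigma_{\max}(\tilde A)+1>1$, a contradiction; hence $\tilde A=0$, and since $\tilde A=B_o^{-1}(A_i-A_o)C_o^{-1}$ with $B_o,C_o$ invertible, $A_i=A_o$.

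I expect the bookkeeping in the middle paragraph to be the crux: one must see that equality of the two cone widths delivers two independent constraints --- the Schur-type identity $M=0$, which fixes $\tilde C$ through Lemma~\ref{lem:inclusion}, and the ``geometric mean $=$ maximum'' identity, which forces $\tilde B\tilde B^T$ to be a scalar multiple of the identity --- and that both are needed before the $A$-argument can even be set up.
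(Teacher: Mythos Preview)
Your proof is correct and follows essentially the same route as the paper's: both start from the trace identity obtained by setting $W_o=W_i$, sandwich it using Lemma~\ref{lem:inclusion} and the geometric-mean/max inequality to force $\tilde C^T\tilde C=\lambda^{-1}I$ and $\tilde B\tilde B^T=\lambda I$, and then kill $\tilde A$. The only real difference is the last step: the paper reuses the bound $1\geq|\zeta_B^T\tilde A\zeta_C|+\lambda_{\max}(\tilde B\tilde B^T)\lambda_{\max}(\tilde C^T\tilde C)$ from the proof of Lemma~\ref{lem:inclusion} and observes that, once both Gram matrices are scalar, $\zeta_B,\zeta_C$ may be arbitrary unit vectors, whence $\tilde A=0$; you instead parametrize $\tilde B,\tilde C$ by orthogonal factors and invoke the containment lemma directly with a rank-one $\Delta'$ aligned to a top singular pair of $\tilde A$. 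Both arguments are equally short and rest on the same containment hypothesis, so this is a cosmetic rather than a substantive variation.
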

\begin{proof}
	When the two widths are equal, the derivation which produced  \eqref{eq:trace_interesting_matrix} yields:
	\begin{gather}
	\tr\left(C_o^T\left(\sqrt[n_y]{\det{(\tilde B\tilde B^T)}}\tilde C^T\tilde C - I \right)C_o\right)=0. \label{eq:trace_pos}
	\end{gather}
	Yet as before, singular values are limited by the inclusion constraint, and this guarantees \eqref{eq:bound_with_p} and a long series of matrix inequalities,
	\begin{gather}
	I\succeq |\zeta_B^T\tilde A\zeta_C|I +\lambda_{\max}(\tilde B \tilde B^T) \lambda_{\max}(\tilde C^T \tilde C)I \nonumber\\
	\succeq \lambda_{\max}(\tilde B\tilde B^T)\tilde C^T\tilde C\succeq \sqrt[n_y]{\det{(\tilde B\tilde B^T)}}\tilde C^T\tilde C = I, \label{eq:long_series}
	\end{gather}
	with this last equality due to the combination of \eqref{eq:trace_pos} and the last inequality above (which can be extended to
	\begin{gather}
	C_o^T \left(\sqrt[n_y]{\det{(\tilde B\tilde B^T)}}\tilde C^T\tilde C - I \right)C_o \preceq 0,
	\end{gather}
	another negative semi-definite matrix) ultimately forcing the inner matrix difference to be zero (as it is both negative semi-definite and has trace zero). 
	
	With both the first and last element identity, \eqref{eq:long_series} is actually a long chain of equalities. This gives
	\begin{gather}
	\lambda\triangleq \lambda_{max}(\tilde B \tilde B^T)=\sqrt[n_y]{\det{(\tilde B\tilde B^T)}},\\
	\quad |\zeta_B^T\tilde A\zeta_C|=0, \quad \lambda \tilde C^T \tilde C =I,
	\end{gather} 
	from which it follows that $C_o^TC_o=\lambda C_i^TC_i$.
	When the geometric mean of the eigenvalues is equal to the largest eigenvalue, all the eigenvalues must be equal; thus $\tilde B \tilde B^T = \lambda I$ (or equivalently $\lambda B_oB_o^T = B_iB_i^T$). Since both $\tilde B \tilde B^T$ and $\tilde C^T\tilde C$ have only one eigenvalue with high multiplicity, the eigenvectors $\zeta_B$ and $\zeta_C$ can be any unit vectors. This in turn guarantees  $\tilde A=0$, that is,  $A_o=A_i$, completing the conditions necessary for the two model-sets to be equivalent. 
\end{proof}


Using these preliminaries, we can prove the following notion of estimation consistency, noting that without some knowledge of how frequently the true inclusion generates extreme data---data on the very edge of the inclusion---it is impossible to claim any rate of convergence.

\begin{theorem}[Estimation Consistency]\label{thm:consistency}
	Consider an infinite list of input output data $\xi_i=[y_i^*,x_i^*]^*\ \forall\ i\in\mathbb N$ points from the inclusion $y_i\in\{(A_T+B_T\Delta C_T)x_i \ : \Delta^*\Delta\preceq I\}$ with generalized cone width $W_T$ (subscript $T$ for true) in the sense that any possible output will eventually be produced within a non-zero tolerance. Suppose that inclusion estimates $y\in\{(A_n+B_n\Delta C_n)x \ : \Delta^*\Delta\preceq I\}$ with generalized cone width $W_n$ are calculated via Prob.~\ref{prob:degenerateQIP} using the subset of data indexes $i=1,\dots,n$ as $n$ increases towards infinity. 
	
	Then $W_{n+1}\geq W_n\ \forall\ n\in \mathbb N$, and
	$W_n\leq W_T$.
	
	Most importantly, the identification procedure is consistent in the sense that if $\exists\ n^\prime$ such that the inclusion width stops changing, $W_n=W_{n^\prime}\ \forall\ n\geq n^\prime$, then the $n^{\prime\text{th}}$ result inclusion must be equivalent to the generating inclusion.
\end{theorem}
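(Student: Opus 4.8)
The plan is to handle the three assertions in sequence. The monotonicity $W_{n+1}\ge W_n$ is immediate: the feasible set of Problem~\ref{prob:degenerateQIP} on $n+1$ data points is the feasible set on $n$ points intersected with the single extra half-space $\{\,0\le\xi_{n+1}^*Q\xi_{n+1}\,\}$, hence it shrinks; maximizing $\log(\det(X_B))$ over a smaller set cannot increase it, and since the trace constraint $1=\tr[X_C]$ is active, Corollary~\ref{corr:logwidth} gives $\log(W_n)=-\tfrac1{2n_y}\log(\det(X_B))$, so $W_n$ is non-decreasing. The bound $W_n\le W_T$ follows by exhibiting the true inclusion as a feasible point: rescale $B_T,C_T$ reciprocally so that $\tr[C_T^TC_T]=1$ --- which leaves the width at $W_T$ by Corollary~\ref{cor:scale} --- and form its SS-DD via \eqref{eq:SSDD:assignment}; the SS-DD equations and the trace constraint hold by construction, and each inclusion inequality $0\le\xi_i^*Q\xi_i$ holds because $\xi_i$ lies in the true inclusion and hence satisfies the quadratic form of Proposition~\ref{prop:quadric}. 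Since the program minimizes width, $W_n\le W_T$.

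For the consistency claim, observe first that $W_n$ converges (non-decreasing and bounded above), and under the hypothesis $W_n=W_\infty:=W_{n'}$ for every $n\ge n'$, so from stage $n'$ on every returned estimate is an optimal solution of its Problem~\ref{prob:degenerateQIP} realizing width $W_\infty$. I would then run a compactness argument on these estimates: the normalization $\tr[C^TC]=1$, the frozen value $\det(BB^T)=W_\infty^{2n_y}$, and the requirement that the (dense, full-dimensional) data be contained together preclude any sequence of estimates from degenerating --- in particular, by the Linear Solutions remark the data admits no perfect linear relationship, so $X_B$ cannot blow up. Hence the estimates lie in a compact set and a subsequence converges to an inclusion $\mathcal I_\infty$ of width $W_\infty$. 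Being cut out by a ``$\ge 0$'' quadratic form, $\mathcal I_\infty$ is closed, and since each $\xi_i$ lies in every estimate of index $\ge i$, it contains all the data, hence --- by the density hypothesis and closedness --- the entire true inclusion. The proposition that geometric containment of inclusions forces the containing width to dominate then gives $W_\infty\ge W_T$, so $W_\infty=W_T$, and Proposition~\ref{prop:equal_W} identifies $\mathcal I_\infty$ with the true inclusion.

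The remaining --- and, I expect, hardest --- step is to transfer this conclusion from the limiting inclusion $\mathcal I_\infty$ back to the stage-$n'$ estimate itself. The lever is that all optimal solutions at any stage $n\ge n'$ share a common ``shape'': $\log\det$ is strictly concave and the SS-DD feasible region is convex, so all optimizers have a single common $X_B$; convex-combining two optimizers and invoking Proposition~\ref{prop:specialness} --- which forces the combination to satisfy \eqref{eq:SSDD:special} --- together with the matrix-convexity of $X_A\mapsto X_A^TX_B^{-1}X_A$ forces a single common $X_A$, hence a common $A$ and a common $BB^T$, necessarily the (normalized) true ones once $W_\infty=W_T$ is known. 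Thus the stage-$n'$ estimate already has $A_{n'}=A_T$ and $B_{n'}B_{n'}^T=B_TB_T^T$, and only its $C^TC$ block is still in question. The density of boundary data across every input direction pins $C^TC=C_T^TC_T$ in the limit; the delicate point --- the main obstacle --- is to propagate that equality back to stage $n'$, using that the width is frozen for \emph{every} $n\ge n'$ and not merely at $n'$ (equivalently, ruling out a stage-$n'$ estimate that omits some later boundary data point yet still realizes width $W_\infty$). This is the place where, if a fully self-contained argument does not close, I would fall back on a genericity/uniqueness hypothesis on the minimizer returned by the solver.
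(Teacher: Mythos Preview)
Your treatment of the first two claims (monotonicity and the upper bound $W_n\le W_T$) is correct and is exactly the paper's argument.

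For the consistency claim, however, you take a substantially longer route than the paper does, and in doing so you create the very obstacle you flag as hardest. The paper's proof is a one-step contradiction working entirely at stage~$n'$: assume the stage-$n'$ estimate is distinct from the true inclusion. Since $W_{n'}\le W_T$, the containment proposition (if outer contains inner then $W_o\ge W_i$) together with Proposition~\ref{prop:equal_W} (equal widths under containment force equivalence) rules out the stage-$n'$ estimate containing the true inclusion. Hence some point of the true inclusion lies strictly outside it; by the density hypothesis that point is eventually realized as a datum $\xi_n$ with $n>n'$, making the stage-$n'$ solution infeasible there---which the paper takes as the desired contradiction. There is no compactness extraction, no limiting object $\mathcal I_\infty$, and therefore no need to ``propagate back'' from a limit to stage~$n'$.

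Your compactness/limit detour is not wrong in spirit, but it manufactures exactly the difficulty you end on: once you pass to $\mathcal I_\infty$, you must argue uniqueness of the optimizer to identify it with the stage-$n'$ estimate, and you correctly note that the $X_C$ block is where this is delicate. The paper simply never leaves stage~$n'$. That said, your uniqueness observations are not wasted: the paper's final sentence tacitly conflates ``the width stops changing'' with ``the estimate stops changing''---strictly, infeasibility of the stage-$n'$ solution at stage~$n$ only forces the \emph{solution} to change, not the \emph{width}. Your strict-concavity remark (a unique $X_B$ among all optimizers at a given stage, then Proposition~\ref{prop:specialness} plus matrix convexity to pin $X_A$) is precisely the tool one would use to close that residual gap in the paper's own short argument, applied at the single stage~$n$ rather than across a subsequential limit.
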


\begin{proof}
	The first claim follows from the nature of the maximization: more constraints can only reduce the objective, this objective is proportional to the negative log of the width, and log is monotonic. The second is a consequence of the true inclusion being a feasible solution to the optimization problem: the optimal solution has the maximal objective over all feasible solutions. As for the third, suppose to the contrary that the inclusions are distinct. The $n^{\prime\text{th}}$ result inclusion cannot contain the true inclusion because it has lesser or equal cone width and is not the same (by supposition). There must be points within the true inclusion and outside the $n^{\prime\text{th}}$ result inclusion. And these points, which will eventually occur for some $n>n^\prime$, will not satisfy the inclusion inequalities with the $n^{\prime\text{th}}$ result inclusion---contradicting the notion that the estimates could stop changing without reaching the true inclusion.
\end{proof}

Convergence to a non-trivial inclusion is an important distinguishing aspect of this style of identification. Inclusions which are built on the error estimates in a least-squares fit \cite{Ljung1999Book,AlbertosSala2002Book,HeubergerHofWahlberg2005book} notably lack this property---converging instead towards a unique model (one element inclusion) as the estimated parameter covariance vanishes with additional samples.

%

\section{A Non-Degenerate Quadric for Noisy Measurements} \label{sec:noise}

Introducing noise into the complex output vector $y$ requires some notation for complex-value statistical distributions. To benefit from the more familiar notation of real-value distributions, let us introduce a simple bijection $\phi:\mathbb C^n \mapsto \mathbb R^{2n}$ which stacks the real part above the imaginary part: $\phi(y)=\begin{pmatrix}(y^T+y^*)/2 & (y^T-y^*)/2j)\end{pmatrix}^T$, with $j$ the imaginary unit. Using this bijection (and the obvious inverse) we will describe distributions over complex numbers as distributions over real numbers.

We assume the physical system is repeatable (though potentially corrupted by noise) if a {condition vector} \cite{ThomasSentis2017ACC}---comprised of the factors that cause the real system to deviate from a unique linear model---is held constant.\footnote{These factors are, for example, the input signal amplitudes and operating points for sinusoidal tests on nonlinear systems, and exogenous signals such as pressure or Mach number for aircraft.}
Formally, let the real-valued representation of the measurement $\phi(\hat y)$, with hat notation indicating a measurement, be a deterministic function of a condition vector $c$ that includes $x$, corrupted by stochastic zero-mean (real) measurement noise $\eta$ (from a potentially $c$-dependent distribution): $\phi(\hat {y})=f(c)+\eta$. These assumptions allow us to average multiple tests and take advantage of the central limit theorem: as the number of averaged samples grows, the distribution of the average approaches a normal distribution and its covariance shrinks towards zero. With $N$ samples of $\hat {y}$, indexed $\hat{y}_n,\ n=1,\dotsc,N$, we can find both the (real vector representation's) sample mean, $\bar {\mathrm y} \triangleq \sum^N_{n=1} \phi(\hat {y}_n)/N$, and sample covariance. The covariance of this mean itself can then be estimated similarly to the sample covariance, $\Sigma_\eta \triangleq \sum_{n=1}^N (\phi(\hat {y}_n)-\bar {\mathrm y})(\phi(\hat {y}_n)-\bar {\mathrm y})^T/(N^2-N) $. Note that, due to this averaging, we can expect that the noisy measurement case approaches the noiseless case as the number of averaged samples increases.

Of course, practical limits on the number of samples force us to consider the intermediate case where noise is small and normal, but not entirely eliminated. That is, data $\bar {\mathrm y} = f(c)+\bar \eta$, with $\bar{\eta}\sim \mathcal N(0,\Sigma_\eta)$.  In this scenario, our degenerate quadric model is inflexible near zero input: a zero $x$ must produce a zero $y$, and a near-zero $x$ must produce a near-zero $y$ unless the cone is preposterously wide. But the noise-corrupted average $\bar {\mathrm y}$ can take on a non-zero value even for zero ${\mathrm y}$. To address this issue, we introduce the idea of fitting a non-degenerate quadric relaxation of the model.

Returning to complex valued vectors\footnote{Bearing in mind that our bar decoration, e.g. $\bar \xi$, still refers to an average, not a complex conjugate.}, we distinguish the true input output data pair $\xi\triangleq(y^*\ x^*)^*$ from its averaged measurement $\bar \xi\triangleq \begin{pmatrix}\phi^{-1}(\bar{\mathrm y})^* & x^*
\end{pmatrix}^*$. This allows us to represent the output-noise-corrupted version of \eqref{eq:quadric_form}: 
\begin{align}
\xi^* Q \xi&=\left(\bar \xi- \begin{pmatrix}\phi^{-1} (\bar \eta)\\0\end{pmatrix}\right)^*Q\left(\bar\xi - \begin{pmatrix}\phi^{-1} (\bar \eta)\\0\end{pmatrix}\right)\geq 0; \\
\bar \xi^* Q \bar \xi &\geq \left[\xi^* Q\begin{pmatrix}\phi^{-1} (\bar \eta)\\0\end{pmatrix}
+\begin{pmatrix}\phi^{-1} (\bar \eta)\\0\end{pmatrix}^* Q\xi \right]\nonumber\\ &+ \begin{pmatrix}\phi^{-1} (\bar \eta)\\0\end{pmatrix}^* Q \begin{pmatrix}\phi^{-1} (\bar \eta)\\0\end{pmatrix}.
\end{align}
That is, substituting noisy data for perfect data shifts the threshold for cone inclusion. We consider the two RHS terms separately to find an appropriately negative lower bound for this new threshold. 

The first term is real, linear in $\bar \eta$, and normal.
However this term vanishes near the origin, where the effects of noise are the most problematic. It is therefore reasonable to ignore this term. The risk of over-conservatism due to inaccurately compensated noise in high-input magnitude data is mitigated by the naturally higher signal-to-noise ratio for this data and the promise that additional averaging can improve the signal-to-noise ratio on such data.

The second term is non-positive, and represents all noise near the origin. It is lower-bounded in magnitude by a chi-square distributed value and an expression which is linear in $X_B$:
\begin{gather}
\begin{pmatrix}\phi^{-1} (\bar \eta)\\0\end{pmatrix}^* Q \begin{pmatrix}\phi^{-1} (\bar \eta)\\0\end{pmatrix}=\bar\eta^T  \underbracedmatrix{-X_B&0\\0&-X_B}{I_2\otimes X_B }\bar\eta,
\nonumber\\
\geq - \|\Sigma_{1/2}(I_2\otimes X_B)\Sigma_{1/2}\| \|\Sigma_{1/2}^{-1}\eta\|^2,
\nonumber\\
\geq - \tr\left[\Sigma_\eta (I_2\otimes X_B)\right] \nu,\quad \nu\sim \chi_{2n_y}^2,
\end{gather}
with notation $\otimes$ for the Kronecker product and $\Sigma_{1/2}\succeq0$ the matrix square root of $\Sigma_\eta$ (satisfying $\Sigma_{1/2}\Sigma_{1/2}=\Sigma_\eta$).

Choosing a constant threshold $\alpha$ based on the survival function of $\chi_{2n_y}^2$, the inclusion's quadratic form inequality threshold is shifted to provide an arbitrarily low chance of feasibility problems with low-magnitude inputs:
\begin{problem}[(Non-Degenerate) Quadric Inclusion Program]\label{prob:QIP}
	\begin{align}
	{\text{\rm maximize}}\quad &
	\log (\det (X_B))
	\nonumber\\
	\text{\rm over}\quad&Q,\ X_B,\ X_{A},\ X_{AA},\ X_{C} \nonumber\\
	\text{\rm subject to} \quad&\text{SS-DD equations \eqref{eq:SSDD:Qneg}--\eqref{eq:SSDD:Qsplit}} \label{eq:ndQIP}\\
	&  1 = \tr[X_C] \nonumber \\
	&  0 \leq \alpha \tr[\Sigma_\eta (I_2\otimes X_B)]+ \xi_{i}^* Q \xi_{i} \quad \forall\quad i\in 1\dots N \nonumber 
	\end{align}
\end{problem}

Note that, since this modification changes the constraints of the problem, it requires re-examining Prop.~\ref{prop:specialness}. Fortunately, \eqref{eq:special_prop_check} does not gain any terms which would invalidate the proof as a result of this noise modification, and we can therefore accept it safely.

As shown in Fig.~\ref{fig:cartoon}, the non-degenerate quadric can be visualized as a hyperbola-bounded region, which asymptotically approaches the original, degenerate, quadric at large amplitudes. As noise-magnitude is reduced through averaging repeated measurements, the deviation between this approximate ``hyperboloid'' relaxation and a potentially more accurate Minkowski-sum style noise-relaxation becomes less significant. The QIP is not consistent in the sense that the degenerate QIP is, but the non-degenerate QIP approaches the degenerate one as noise is averaged away.\footnote{Consistency in the presence of deterministic noise may be possible as an extension of the degenerate QIP, however.}

\begin{figure}
	\centering
	\includegraphics[width=.82\columnwidth]{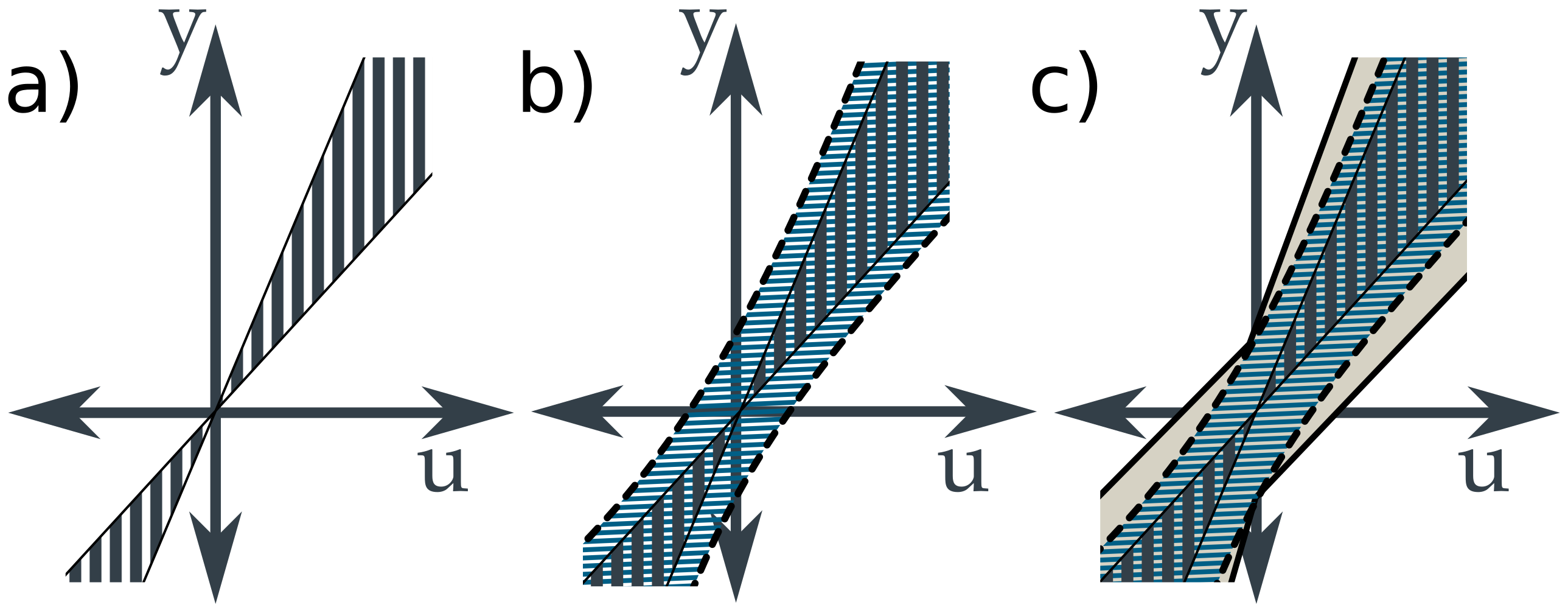}
	\begin{minipage}{3.5in}
%
%
%
%
	\end{minipage}
	\caption{Illustration of a hyperboloid cone. (a) a (degenerate quadric) cone in 2D. (b) a  hyperboloid (non-degenerate quadric) cone is overlaid over the original cone. (c) a properly (Minkowski sum) inflated cone is overlaid on the previous two cones.}\label{fig:cartoon}
\end{figure}

\section{Visualizing QIP in 2D}\label{sec:qip_disc}

Fig.~\ref{fig:anscombe} shows the QIP fit for a standard statistics data set \cite{Anscombe1973TAS}. Unlike Least Squares, the QIP is outlier-sensitive, as it bounds worst-case behavior. Only two data points in each plot lie on the non-degenerate quadric boundary, and these data determine the final result. Hence the caution with which we urge averaging repeatable tests to ensure that each point in the QIP has as little noise as possible. 
\begin{figure}
	\centering
	\begin{minipage}{3.5in}
		\centering\resizebox{3in}{!}{
			\def\svgwidth{3 in}
		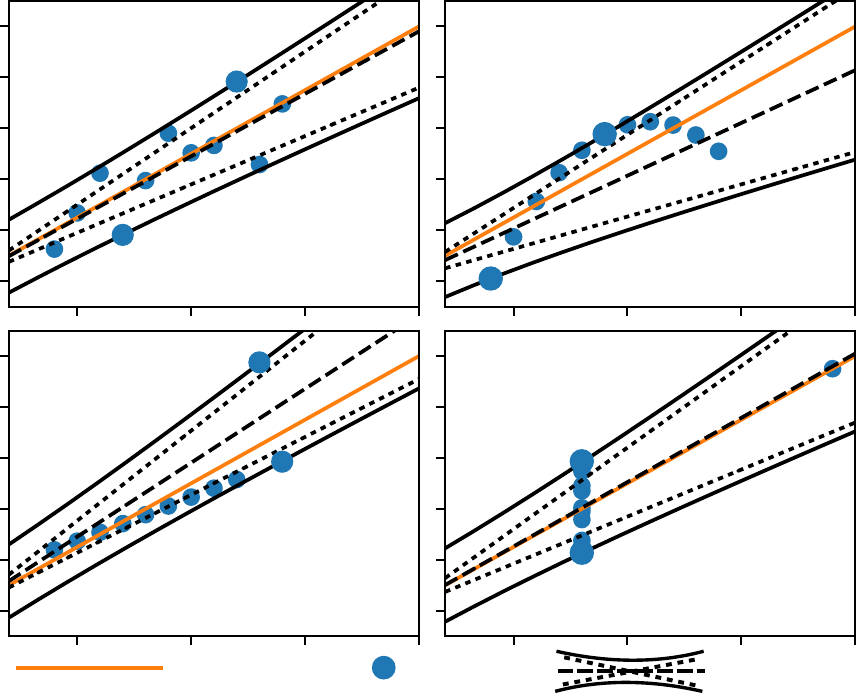}
	\end{minipage}
	\caption{Anscombe's Quartet \cite{Anscombe1973TAS}, a set of four lists of x-y points that have all the same statistics, yet have very different underlying data. QIP fits (with $\alpha \Sigma_\eta=2$) differ in each plot. The data is shifted from Anscombe's original (-3 in $y$) so that the least squares fit lines intersect the origin. QIP fit displayed with nominal model (dashed line), the degenerate quadric asymptote (cone-like dotted lines), and non-degenerate quadric bounds---which must include the data---(solid lines in the shape of a hyperbola). The two data points in each plot which are highlighted (enlarged) are the critical points which determine the width of the QIP fit's cone. These points correspond to the active set of inequality constraints in Prob.~\ref{prob:QIP}.}\label{fig:anscombe}
\end{figure}

	\begin{figure*}%
		\centering%
		\scalebox{1.0}{%
			\def\svgwidth{1.0\textwidth}%
			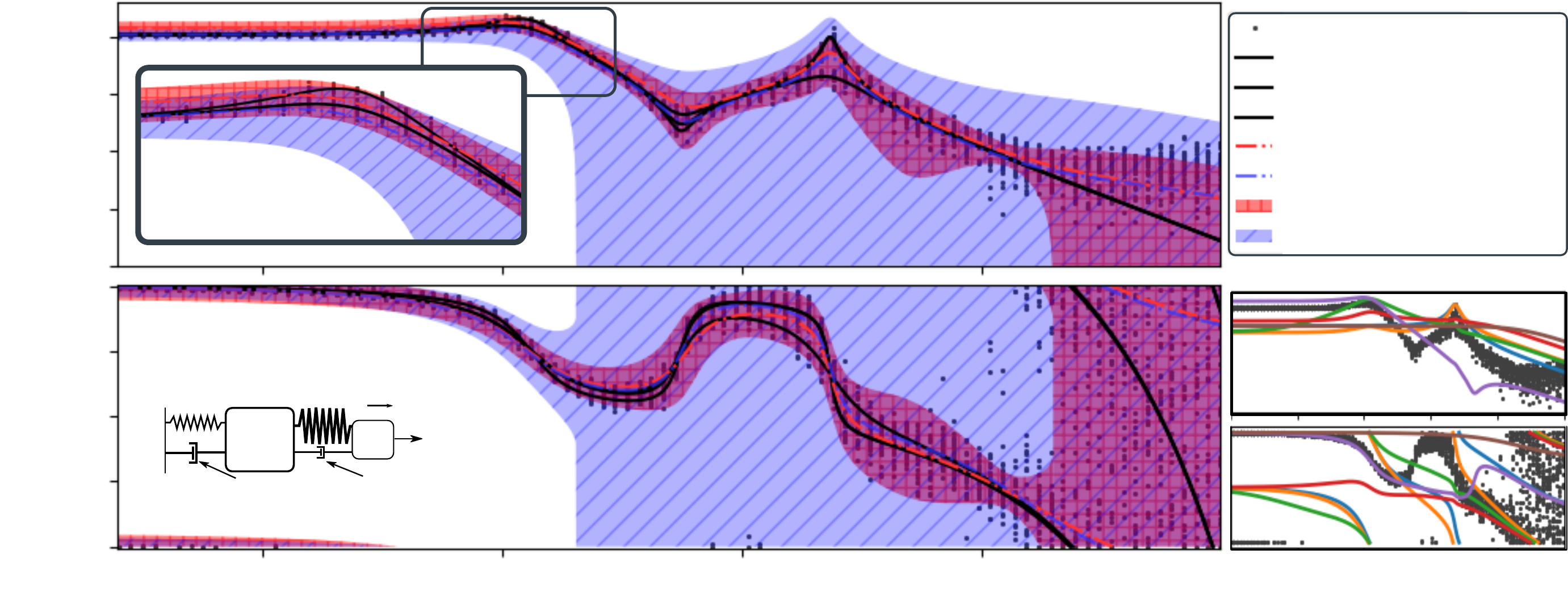%
		}%
		\vspace{-9pt}
		\caption{Inclusion based identification using an orthonormal basis.
		}\label{fig:experiment}%
	\end{figure*}

Reading Fig.~\ref{fig:anscombe} left to right, top to bottom, we see four scenarios that have the same ordinary least squares (OLS) best fit line (and error statistics), but different QIP fits. Note that the data set does not come with uncertain points, so a non-zero relaxation ($\alpha\Sigma_\eta=2$) has been applied to allow visualization of the hyperboloid behavior. In plot (a), the data are essentially a noisy line and both OLS and QIP find it. But, since QIP is inflated using a degenerate quadric instead of a Minkowski sum, it still describes these essentially linear data as a cone. In plot (b) there is a non-linear pattern to the data, and OLS approximates them in a somewhat meaningless way, while QIP fits a bound to the observed data. With a lower $\alpha\Sigma_\eta$, QIP would produce a very wide cone due to the leftmost data point. In (c), the data essentially lie in a cone, which QIP finds---but only approximately, given the nonzero $\alpha\Sigma_\eta$. OLS, fitting a line, leans towards the side which has more data points---a bias that QIP ignores. The final plot, designed to show how OLS can draw slope inferences from only the single outlier, a problem QIP doesn't have, because it has a fixed origin---its slope is determined by the extreme-most elements, which happen to lie in the big central stack.

This sensitivity to outliers suggests the possibility of more efficient testing if QIP fitting were paired with a machine learning system to select the next test condition (perhaps estimating the parameter--realization maps of \cite{Toth2010Book}), rather than collecting all data beforehand.

\section{Example: Transfer Function Approximation} \label{sec:FDID2}
In this section we demonstrate how QIP can be applied to solve a transfer function approximation problem, using a basis of transfer functions. The results are visualized in Fig~\ref{fig:experiment}.

The mechanical schematic shown in Fig.~\ref{fig:experiment}.c, with force input $f$ and position output $y$, has two second-order pole pairs representing the natural modes of the system. This system has a .1 second time-delay. In addition, the system has friction-related uncertainty, which we simulate as a condition vector dependent damping ratio for the two second-order pole pairs. Three experiments are simulated, each with different parameters for the damping ratios of these pole pairs, and these three true system configurations are plotted in Fig.~\ref{fig:experiment}.a. Together, these imperfections simulate the physical limitations which our norm-bounded inclusion model should represent.

Each experiment measures---for 100 angular frequencies evenly log-spaced between .01 and 100 rad/s---an input vector, a mean output vector, and the covariance matrix for the mean output vector (as per \cite{ThomasSentis2017ACC} and Sec.~\ref{sec:noise}).
The three experiments differ in their condition vector, which represents whichever under-modeled factors are responsible for changing the damping ratios of the pole pairs. For realism, we introduce white measurement and input noise averaged over the observational window of the ``Single Period Phasor Transform'' as defined in \cite{ThomasSentis2017ACC}. The noisy frequency response measurements for the three configurations are scatter plotted in Fig.~\ref{fig:experiment}.a.   

These same data are replicated in Fig.~\ref{fig:experiment}.d. This subplot compares the raw data with the six basis functions we use to parameterize the uncertain model-set approximation of the system. Good basis function selection revolves around approximation of the system poles with the basis functions \cite{HeubergerHofWahlberg2005book}. The poles of these basis functions were chosen to loosely approximate the poles of the system, plus one low pass filter to approximate the time delay, and one extra high frequency pole to approximate a bias. Perfect pole matching is impossible, as the system has both a time delay and changing friction parameters. We denote these six functions as the elements of a one-input, six-output (sixth order) transfer matrix $\mathbf C(sI-\mathbf A)^{-1}\mathbf B$. These transfer functions are designed to be orthonormal in $\mathcal H_2$ by appropriate selection of $\mathbf C$.\footnote{We find controllability Gramian $\mathbf W_c$ to satisfy $\mathbf W_c=\mathbf W_c^T\succ0,$ $ \mathbf W_c \mathbf A+\mathbf A^T \mathbf W_c+\mathbf B \mathbf B^T=0,$ and choose $\mathbf C$ such that $\mathbf C \mathbf W_c \mathbf C^T = I.$}

Our QIP method can be directly applied to this problem by multiplying the uncertain model of equation \eqref{eq:base_inclusion} by the basis function generator as follows
\begin{gather}
\underbracedmatrix{Y(s)}{y}=\left(A+B\Delta(s)C\right)\underbracedmatrix{\mathbf C (sI-\mathbf A)^{-1}\mathbf B F(s)}{x}, \label{eq:fixedpoleID}
\end{gather}
\par\vspace{-7pt}\hspace{-1em}%
where $F(s),\ Y(s)\ \in \mathbb C^1$ are the Laplace transforms of input-force and output-position respectively. And we solve this problem (in about 4.6 seconds) using CVXPY \cite{cvxpy}.

The QIP fit model is shown in Fig.~\ref{fig:experiment}.a alongside a model fit using weighted least squares (LS). The LS uncertain model, $Y_{LS}(s) = (A_{LS} + B_{LS}\Delta(s) C_{LS})\mathbf C(sI-\mathbf A)^{-1}\mathbf B F(s)$ uses the least squares parameter estimate as the nominal model $A_{LS}$, chooses $B_{LS}\triangleq 1$ and finds $C_{LS}^TC_{LS}\triangleq \Sigma_A$ according to the least squares parameter co-variance $\Sigma_A$. This uncertainty was then scaled to satisfy all the same inequalities required of the non-degenerate quadric inclusion program in Prob.~\ref{prob:QIP}---to ensure a fair comparison. The minimum such scaling is 10,860. The least squares fit was weighted using the sample covariance data.\footnote{An unweighted least squares fit based only on the raw measurements (not the average and covariance data) performed even worse.}





Thanks to this scaling, both the QIP and LS uncertainty zones include all three true system lines at almost every frequency (Fig.~\ref{fig:experiment}.a). As demonstrated in the Fig.~\ref{fig:experiment}.b magnification, both QIP and LS exclude some of the raw data---they have been forced to (approximately) include only the test averages. The large error in the nominal LS model alongside its minuscule original uncertainty in Fig.~\ref{fig:experiment}.b are the cause of the LS model's 10,860-fold uncertainty expansion.

Due to the noise relaxation, the QIP model does not perfectly include the true system models at all frequencies. At the 5 rad/s double pole, we see two of the true system configurations escape the QIP bound between sequential measurements in the magnitude plot. This is partially due to sparse sampling of frequency points. But even violating the containment at a measured frequency is expected given the noisy measurements and the nature of our noise relaxation.

Most importantly, the LS uncertainty set is much larger than that of the QIP. The LS parameter uncertainty estimates are based on the covariance of these parameters if the model were true, linear, and Gaussian, and these parameters were estimated again using new data. These estimates do not lead to a tight, uncertainty-bounding model. QIP does. It is explicitly optimized to use the minimal amount of uncertainty necessary to account for each statistical data point (a mean and sample variance) representing an observed behavior of the system.

\section{Conclusion}\label{discussion}


When people use $\hinf$ control they expect a guarantee of performance, a responsibility which $\hinf$ control delegates to the system model-set. Due to the importance of this guarantee, practitioners will estimate uncertainty which is large enough to make the system work---sacrificing performance. It was our aim to extract the best possible performance from a system, and so we sought leaner, more aggressive model-sets. 

This led us to visualize the model-set as a high dimensional degenerate quadric in the space of inputs and outputs. We introduced the QIP as a lossless convexification for the problem of fitting a minimal quadric around a list of observed data points. 
This new machinery appears to be somewhat more general than our context of identification for robust control, since it offers a geometry-based alternative to the nearly universal least squares problem. 
And within system identification, there are many approaches which use least squares and could potentially identify robust models via QIP.

Our motivation is robotics, where there is often little intuition to be had for the proper shape of uncertainty. We hope to use this technique to imbue footstep planners \cite{KimZhaoThomasFernandezSentis2016TRO} and series elastic robot controllers \cite{RaoThomasEA2017ICRA} with uncertainty-driven performance limits. 


\bibliographystyle{IEEEtran}
\bibliography{lmi,sea,wbc,sysID}

 \newcommand{\noop}[1]{}
\begin{thebibliography}{10}
\providecommand{\url}[1]{#1}
\csname url@samestyle\endcsname
\providecommand{\newblock}{\relax}
\providecommand{\bibinfo}[2]{#2}
\providecommand{\BIBentrySTDinterwordspacing}{\spaceskip=0pt\relax}
\providecommand{\BIBentryALTinterwordstretchfactor}{4}
\providecommand{\BIBentryALTinterwordspacing}{\spaceskip=\fontdimen2\font plus
\BIBentryALTinterwordstretchfactor\fontdimen3\font minus
  \fontdimen4\font\relax}
\providecommand{\BIBforeignlanguage}[2]{{%
\expandafter\ifx\csname l@#1\endcsname\relax
\typeout{** WARNING: IEEEtran.bst: No hyphenation pattern has been}%
\typeout{** loaded for the language `#1'. Using the pattern for}%
\typeout{** the default language instead.}%
\else
\language=\csname l@#1\endcsname
\fi
#2}}
\providecommand{\BIBdecl}{\relax}
\BIBdecl

\bibitem{ZhouDoyleGlover1996Book}
K.~Zhou, J.~C. Doyle, K.~Glover \emph{et~al.}, \emph{Robust and optimal
  control}.\hskip 1em plus 0.5em minus 0.4em\relax Prentice hall New Jersey,
  1996.

\bibitem{ZhouDoyle1998Book}
K.~Zhou and J.~C. Doyle, \emph{Essentials of robust control}.\hskip 1em plus
  0.5em minus 0.4em\relax Prentice hall Upper Saddle River, NJ, 1998.

\bibitem{DullerudPaganini2013Book}
G.~E. Dullerud and F.~Paganini, \emph{A course in robust control theory: a
  convex approach}.\hskip 1em plus 0.5em minus 0.4em\relax Springer, 2013,
  vol.~36.

\bibitem{Ljung1999Book}
L.~Ljung, Ed., \emph{System Identification (2nd Ed.): Theory for the
  User}.\hskip 1em plus 0.5em minus 0.4em\relax Upper Saddle River, NJ, USA:
  Prentice Hall PTR, 1999.

\bibitem{ZangBitmeadGevers1995Automatica}
Z.~Zang, R.~R. Bitmead, and M.~Gevers, ``Iterative weighted least-squares
  identification and weighted {LQG} control design,'' \emph{Automatica},
  vol.~31, no.~11, pp. 1577--1594, 1995.

\bibitem{HakvoortVDHof1997TAC}
R.~G. Hakvoort and M.~J. van~den Hof, ``Identification of probabilistic system
  uncertainty regions by explicit evaluation of bias and variance errors,''
  \emph{IEEE Transactions on Automatic Control}, vol.~42, no.~11, pp.
  1516--1528, Nov 1997.

\bibitem{ForssellLjung1999Automatica}
U.~Forssell and L.~Ljung, ``Closed-loop identification revisited,''
  \emph{Automatica}, vol.~35, no.~7, pp. 1215--1241, 1999.

\bibitem{AlbertosSala2002Book}
P.~Albertos and A.~Sala, \emph{Iterative identification and control: advances
  in theory and applications}.\hskip 1em plus 0.5em minus 0.4em\relax Springer,
  2002.

\bibitem{BomboisGeversScorlettiAnderson2001Automatica}
X.~Bombois, M.~Gevers, G.~Scorletti, and B.~D. Anderson, ``Robustness analysis
  tools for an uncertainty set obtained by prediction error identification,''
  \emph{Automatica}, vol.~37, no.~10, pp. 1629--1636, 2001.

\bibitem{Toffner1996Book}
S.~T{\o}ffner-Clausen, \emph{System identification and robust control: A case
  study approach}.\hskip 1em plus 0.5em minus 0.4em\relax Springer, 1996.

\bibitem{GoodwinSalgado1989ACSP}
G.~C. Goodwin and M.~E. Salgado, ``A stochastic embedding approach for
  quantifying uncertainty in the estimation of restricted complexity models,''
  \emph{International Journal of Adaptive Control and Signal Processing},
  vol.~3, no.~4, pp. 333--356, 1989.

\bibitem{PintelonSchoukens2012book}
R.~Pintelon and J.~Schoukens, \emph{System identification: a frequency domain
  approach}.\hskip 1em plus 0.5em minus 0.4em\relax John Wiley \& Sons, 2012.

\bibitem{HelmickiJacobsonNett1990ACC}
A.~J. Helmicki, C.~A. Jacobson, and C.~N. Nett, ``Identification in $\hinf$: a
  robustly convergent, nonlinear algorithm,'' in \emph{1990 American Control
  Conference}, May 1990, pp. 386--391.

\bibitem{HelmickiJacobsonNett1991TAC}
------, ``Control oriented system identification: a worst-case/deterministic
  approach in $\hinf$,'' \emph{IEEE Transactions on Automatic Control},
  vol.~36, no.~10, pp. 1163--1176, Oct 1991.

\bibitem{PoollaKhargonekarTikkuKrauseNagpal1994TAC}
K.~Poolla, P.~Khargonekar, A.~Tikku, J.~Krause, and K.~Nagpal, ``A time-domain
  approach to model validation,'' \emph{IEEE Transactions on Automatic
  Control}, vol.~39, no.~5, pp. 951--959, May 1994.

\bibitem{BoydVandenberghe2004Book}
S.~Boyd and L.~Vandenberghe, \emph{Convex optimization}.\hskip 1em plus 0.5em
  minus 0.4em\relax Cambridge university press, 2004.

\bibitem{HeubergerHofWahlberg2005book}
P.~S. Heuberger, P.~M. van~den Hof, and B.~Wahlberg, \emph{Modelling and
  identification with rational orthogonal basis functions}.\hskip 1em plus
  0.5em minus 0.4em\relax Springer Science \& Business Media, 2005.

\bibitem{ThomasSentis2017ACC}
G.~C. Thomas and L.~Sentis, ``{MIMO} identification of frequency-domain
  unreliability in {SEAs},'' in \emph{American Control Conference (ACC),
  2017}.\hskip 1em plus 0.5em minus 0.4em\relax IEEE, May 2017, pp. 4436--4441.

\bibitem{Anscombe1973TAS}
F.~J. Anscombe, ``Graphs in statistical analysis,'' \emph{The American
  Statistician}, vol.~27, no.~1, pp. 17--21, 1973.

\bibitem{Toth2010Book}
R.~T{\'o}th, \emph{Modeling and identification of linear parameter-varying
  systems}.\hskip 1em plus 0.5em minus 0.4em\relax Springer, 2010.

\bibitem{cvxpy}
S.~Diamond and S.~Boyd, ``{CVXPY}: A {P}ython-embedded modeling language for
  convex optimization,'' \emph{Journal of Machine Learning Research}, vol.~17,
  no.~83, pp. 1--5, 2016.

\bibitem{KimZhaoThomasFernandezSentis2016TRO}
D.~Kim, Y.~Zhao, G.~Thomas, B.~R. Fernandez, and L.~Sentis, ``Stabilizing
  series-elastic point-foot bipeds using whole-body operational space
  control,'' \emph{IEEE Transactions on Robotics}, vol.~32, no.~6, pp.
  1362--1379, 2016.

\bibitem{RaoThomasEA2017ICRA}
P.~Rao, G.~C. Thomas, L.~Sentis, and A.~D. Deshpande, ``Analyzing achievable
  stiffness control bounds of robotic hands with compliantly coupled finger
  joints,'' in \emph{IEEE International Conference on Robotics and Automation
  (ICRA)}, 2017.

\end{thebibliography}

\end{document}